\theoremstyle{plain}
\newtheorem{thm}{Theorem}
\newtheorem{cor}{Corollary}
\newtheorem{lemma}{Lemma}
\newtheorem*{lemma*}{Lemma}
\newtheorem{prop}{Proposition}
\theoremstyle{remark}
\def\iidsim{\stackrel{\mathrm{\scriptscriptstyle iid}}{\sim}}
\def\Cov{\mbox{Cov}}
\def\iidsim{\stackrel{\mbox{\tiny iid}}{\sim}}
\def\a{\alpha}
\def\E{\mathscr{E}}
\def\z{\mathbf{z}}
\def\dd{\boldsymbol{\delta}}
\def\bth{\boldsymbol{\theta}}
\def\th{\theta}
\def\vv{\mathbf{v}}
\def\bxi{\boldsymbol{\xi}}
\def\tr{\mathrm{tr}}
\def\1{\mathbf{1}}
\def\aa{\boldsymbol{\alpha}}
\def\e{\epsilon}
\def\ee{\boldsymbol{\epsilon}}
\def\b{{\bf b}}
\def\bb{\boldsymbol{\beta}}
\def\d{\delta}
\def\Var{\mathrm{Var}}
\def\g{\gamma}
\def\x{\mathbf{x}}
\def\w{\mathbf{w}}
\def\s{\sigma}
\def\t{\mathbf{t}}
\def\y{\mathbf{y}}
\def\1{\mathbf{1}}
\def\P{\mathscr{P}}
\def\t{\mathbf{t}}
\def\R{\mathbb{R}}
\def\X{\mathbf{X}}
\def\S{\mathit{\Sigma}}
\begin{document}
\begin{frontmatter}
\title{Optimal Estimation and Prediction for Dense Signals in
 High-Dimensional Linear Models \protect}
\runtitle{Dense Signals and High-Dimensional Linear Models}

\begin{aug}
\author{\fnms{Lee} \snm{Dicker}
\ead[label=e1]{ldicker@stat.rutgers.edu}}


\runauthor{L. Dicker}

\affiliation{Rutgers University}

\address{Department of Statistics and Biostatistics \\ Rutgers University \\ 501 Hill Center, 
 110 Frelinghuysen Road \\ Piscataway, NJ 08854 \\
\printead{e1}}
\end{aug}

\begin{keyword}[class=AMS]
\kwd[Primary ]{62J05}
\kwd[; secondary ]{62C20}
\end{keyword}

\begin{keyword}
\kwd{adaptive estimation}
\kwd{asymptotic minimax}
\kwd{non-Gaussian sequence model}
\kwd{oracle estimators}  
\kwd{ridge regression}
\end{keyword}

\begin{abstract}
Estimation and prediction problems for dense 
signals are often framed in terms of minimax problems over highly
symmetric parameter spaces.  In this paper, we study minimax problems
over $\ell^2$-balls for
high-dimensional linear models with Gaussian predictors. We obtain
sharp asymptotics for the minimax risk that are applicable in any asymptotic
setting where the number of predictors diverges and prove that ridge
regression is asymptotically minimax. Adaptive asymptotic minimax
ridge estimators are also identified.  Orthogonal invariance is
heavily exploited throughout the paper and, beyond serving as a
technical tool, provides additional insight into the problems
considered here.  Most of our results
follow from an apparently novel analysis of an equivalent non-Gaussian sequence
model with orthogonally invariant errors.   As with many dense
estimation and prediction problems, the minimax risk studied here has rate $d/n$,
where $d$ is the number of predictors and $n$ is the number of
observations; however, when $d \asymp n$ the minimax risk is influenced by the spectral distribution of the predictors and is notably different from the linear minimax risk
for the Gaussian sequence model \citep{pinsker1980optimal} that often appears in other dense
estimation and prediction problems.  
\end{abstract}

\end{frontmatter}
\section{Introduction}

This paper is about
estimation and prediction problems involving non-sparse (or ``dense'') signals in high-dimensional
linear models.   By contrast, a great deal of recent research into high-dimensional
linear models  has focused on
sparsity.  Though there are many notions of sparsity
(e.g. $\ell^p$-sparsity \citep{abramovich2006adapting}), a vector
$\bb \in \R^d$ is typically considered to be sparse if many of its
coordinates are very close to 0.   Perhaps one of the general
principals that has emerged from the literature on sparse high-dimensional
linear models may be summarized as follows: if the parameter of interest
is sparse, then this can
often be leveraged to develop methods that perform very well, even
when the number of predictors is much larger than the number of observations.  Indeed, powerful theoretical
performance guarantees are available for many methods developed under
this paradigm, provided the parameter of interest is sparse
\citep{candes2007dantzig, bunea2007sparsity, 
  bickel2009simultaneous, zhang2010nearly,
  fan2011nonconcave, rigollet2011exponential}.  Furthermore, in many
applications -- especially in engineering and signal processing -- sparsity assumptions have been repeatedly validated \citep{donoho1995noising,lustig2007sparse,  duarte2008single,
  wright2008robust, erlich2010compressed}.  However, there is
less certainty about the manifestations of sparsity in other important applications where
high-dimensional data is abundant. For example, several recent papers
have questioned the degree of sparsity in modern genomic datasets
(see, for instance, \citep{hall2009feature}, and the references
contained therein -- including \citep{kraft2009genetic, goldstein2009common, hirschhorn2009genomewide} -- and, more
recently, \citep{bansal2010statistical,manolio2010genomewide}).  In
situations like these, sparse methods may be sub-optimal and methods
designed for dense problems may be more appropriate. 

Let $d$ and $n$ denote the number of predictors and observations,
respectively, in a linear regression problem.  In dense estimation and prediction problems, where the parameter of
interest is not assumed to be sparse, $d/n \to 0$ is often required to ensure
consistency.  Indeed, this is the case for the problems considered in
this paper.  In this sense, dense problems are more challenging than
sparse problems, where consistency may be possible when $d/n \to
\infty$.  This lends credence to Friedman et al.'s (2004)  ``bet on
sparsity'' principle for high-dimensional data analysis:

\begin{quote}
Use a procedure that does well in sparse problems, since no procedure does well in dense problems.  
\end{quote}
The ``bet on sparsity'' principle has proven to be very useful, especially in
applications where sparsity prevails, and it may help to explain some of
the recent emphasis on understanding sparse problems.  However, the emergence of
important problems in high-dimensional data analysis where the role of
sparsity is less clear highlights the importance of characterizing and
thoroughly understanding dense problems in high-dimensional data
analysis.  This paper addresses some of these problems.

Minimax problems over highly symmetric parameter spaces have often been equated with
dense estimation problems in many statistical settings
\citep{donoho1994minimax, johnstone2011gaussian}. 
In this paper, we study the minimax risk over $\ell^2$-balls for
high-dimensional linear models with Gaussian predictors.  
We identify several informative,
asymptotically equivalent formulations of the problem and provide a complete asymptotic solution when the number of predictors $d$
grows large.  In particular, we obtain sharp asymptotics for the
minimax risk that are applicable in any asymptotic setting where $d
\to \infty$ and we show that ridge regression estimators \citep{tikhonov1943stability,
  hoerl1970ridge} are asymptotically minimax.  Adaptive asymptotic minimax
ridge estimators are also discussed.  Our results follow from
carefully analyzing an equivalent non-Gaussian sequence model with
orthogonally invariant errors and the novel use of two
classical tools -- Brown's identity \citep{brown1971admissible} and
Stam's inequality \citep{stam1959some} -- to relate this sequence
model to the Gaussian sequence model with iid errors.  The results in
this paper
share some similarities with those found in
\citep{goldenshluger2001adaptive, goldenshluger2003optimal}, which
address minimax prediction over $\ell^2$-ellipsoids.  However, the
implications of our results and the methods used to prove them differ
substantially from Goldenshluger and Tsybakov's (this is discussed in more detail in Sections
2.2-2.3 below).  

\section{Background and preliminaries}

\subsection{Statistical setting}

Suppose that the observed data consists of outcomes $y_1,...,y_n\in \R$ and
$d$-dimensional predictors $\x_1,...,\x_n \in \R^d$.  The outcomes and
predictors follow a linear model and are related via the equation 
\begin{equation}\label{lm}
y_i = \x_i^T\bb + \e_i, \ \ i = 1,...,n,
\end{equation}
where $\bb = (\beta_1,...,\beta_d)^T \in \R^d$ is an unknown parameter
vector (also referred to as ``the signal'') and $\e_1,...,\e_n$ are
unobserved errors.  To simplify notation, let $\y = (y_1,...,y_n)^T
\in \R^n$, $X = (\x_1,...,\x_n)^T$, and $\ee = (\e_1,...,\e_n)^T$.
Then (\ref{lm}) may be rewritten as $\y = X\bb + \ee$.  In many
high-dimensional settings it is natural to consider the predictors
$\x_i$ to be random.  In this paper, we assume that 
\begin{equation}\label{norm}
\x_1,...,\x_n \iidsim
N(0,I) \mbox{ and } \e_1,...,\e_n \iidsim N(0,1)
\end{equation}
are independent, where $I = I_d$ is
the $d \times d$ identity matrix.  These
distributional assumptions impose significant additional structure on the
linear model (\ref{lm}).  However, similar models have been studied previously \citep{stein1960multiple, baranchik1973inadmissibility, breiman1983many, brown1990ancillarity, leeb2009conditional} and we believe that the insights imparted by the resulting simplifications are worthwhile.
For the results in this paper, perhaps the most noteworthy simplifying consequence of the normality
assumption (\ref{norm}) is that the distributions of $\X$ and $\ee$
are invariant under orthogonal transformations.  

We point
out that the assumption $E(\x_i) = 0$ (which is implicit in
(\ref{norm})) is not particularly limiting:
if $E(\x_i) \neq 0$, then we can reduce to the mean 0 case by
centering and decorrelating the data. If $\Var(\e_i) = \s^2 \neq 1$
and $\s^2$ is known, then this can easily be reduced to the case where
$\Var(\e_i) = 1$.  If $\s^2$ is unknown and $d < n$, then $\s^2$ can be
effectively estimated and one can reduce to the case where $\Var(\e_i)
= 1$ \citep{dicker2012dense}.  We conjecture that $\s^2$ can be effectively
estimated when $d > n$, provided $\sup d/n < \infty$
(for sparse $\bb$, \cite{sun2011scaled} and \cite{fan2012variance}
have shown that $\s^2$ can be estimated when $d \gg n$).  
\cite{dicker2012dense} has discussed the implications if $\Cov(\x_i) = \S \neq
I$.  Essentially, when the emphasis is prediction and non-sparse signals, if a
norm-consistent estimator for $\Cov(\x_i) = \S$ is available, then it is possible
to reduce to the case where $\Cov(\x_i) = I$; if a norm-consistent
estimator is not available, then limitations entail, however, these
limitations may not be overly restrictive (this is discussed further in
Section 3.2 below).  

Let $||\cdot || = ||\cdot||_2$ denote the $\ell^2$-norm.  In this
paper we study the performance of estimators $\hat{\bb}$ for $\bb$ with
respect to the risk function
\begin{equation}\label{risk}
R(\hat{\bb},\bb) = R_{d,n}(\hat{\bb},\bb) = E_{\bb}||\hat{\bb} - \bb||^2 ,
\end{equation}
where the
expectation is taken over $(\ee,X)$ and the subscript $\bb$ in
$E_{\bb}$ indicates that $\y = X\bb + \ee$ (below, for expectations that do
not involve $\y$, we will often omit this subscript).  We emphasize that the
expectation in (\ref{risk}) is taken over the predictors $X$ as well
as the errors $\ee$, i.e. it is {\em not} conditional on $X$.  The risk
$R(\hat{\bb},\bb)$ is a measure of estimation error.  However,  
it can also be interpreted as the unconditional out-of-sample prediction
error (predictive risk) associated with the
estimator $\hat{\bb}$
\citep{stein1960multiple, breiman1983many, leeb2009conditional}.  

\subsection{Dense signals, sparse signals, and ellipsoids} 

Let $B(c) = B_d(c) = \{\bb \in \R^d; \ ||\bb|| \leq c\}$ denote the
$\ell^2$-ball of radius $c \geq 0$.  Though a given signal $\bb \in \R^d$ is often
considered to be dense if it has many nonzero entries, when studying broader
properties of dense signals and dense estimators it is common to consider
minimax problems over highly symmetric, convex (or loss-convex
\citep{donoho1994minimax}) parameter spaces.  Following this approach,
one of the primary quantities that we use as a benchmark for evaluating estimators and determining performance limits in dense
estimation problems  is the
minimax risk over $B(c)$: 
\begin{equation}\label{rb}
R^{(b)}(c) = R_{d,n}^{(b)}(c) = \inf_{\hat{\bb}} \sup_{\bb \in B(c)} R(\hat{\bb},\bb).
\end{equation}
The infimum on the right-hand side in (\ref{rb}) is taken over all measurable
 estimators $\hat{\bb}$ and the superscript ``$b$'' in $R^{(b)}(c)$ indicates
 that the relevant parameter space is the $\ell^2$-ball.

A basic consequence of the results in this paper is $R^{(b)}(c) \asymp
d/n$.  Thus, one must have $d/n \to 0$ in order to ensure consistent
estimation over $B(c)$.  This is a well-known feature of dense estimation problems and, as mentioned in
Section 1, contrasts with many
 results on sparse estimation that imply $\bb$ may be
consistently estimated when $d/n \to \infty$. However, the sparsity
conditions on $\bb$ that are required for these results may not hold in general and our motivating interest lies
precisely in such situations. 
In this paper we derive sharp asymptotics for $R^{(b)}(c)$ and related
quantities in settings where $d/n \to 0$, $d/n \to
\rho  \in (0,\infty)$, and $d/n \to \infty$ (we assume that $d \to
\infty$ throughout).   Though
consistent estimation is only guaranteed when $d/n \to 0$, there are important situations where one
might hope to analyze high-dimensional datasets with $d/n$
substantially larger than 0, even if there is
little reason to believe that sparsity assumptions are valid.  The results in
this paper provide detailed information that may be useful in
situations like these.  

In addition to sparse estimation problems, minimax rates faster than $d/n$ have also been obtained for minimax
problems over $\ell^2$-ellipsoids, which have been studied extensively in
situations similar to those considered here
 \citep{pinsker1980optimal, cavalier2002sharp, goldenshluger2001adaptive,
  goldenshluger2003optimal}.  Much of this work has been motivated by
problems in nonparametric function estimation.  The results in
this paper are related to many of these existing results, however, there are important
differences -- both in their implications and the techniques used to
prove them.  Goldenshluger and Tsybakov's
(2001, 2003) work may be most closely related to ours.  
Define the $\ell^2$-ellipsoid $B(c,\aa) = \{\bb \in \R^d;
\ \sum_{i = 1}^n \a_i\beta_i^2 \leq c^2\}$, with $\aa = (\a_1,...,\a_d)^T
\in \R^d$, $0 \leq \a_1 \leq \cdots \leq \a_d$.   Goldenshluger
and Tsybakov studied minimax problems over
$\ell^2$-ellipsoids for a linear model with random predictors similar
to the model considered here (in fact, Goldenshluger and Tsybakov's results
apply to infinite-dimensional non-Gaussian $\x_i$, though $\x_i$ are required to have Gaussian
tails and independent coordinates).  They identified asymptotically
minimax estimators over $B(c,\aa)$ and adaptive asymptotically minimax
estimators and showed that the minimax rate may be substantially faster than $d/n$.
However, their results also require the axes of $B(c,\aa)$ to decay
rapidly (i.e. $a_d/c \to \infty$ quickly) and do
not apply to $\ell^2$-balls $B(c) = B(c,(1,...,1)^T)$ unless $d/n \to
0$.  Though
these decay conditions are natural for many inverse problems in
nonparametric function estimation, they drive the improved minimax
rates obtained by
Goldenshluger and Tsybakov and may be overly
restrictive in other settings, such as the genomics applications
discussed in Section 1 above.

\subsection{The sequence model}

Minimax problems over restricted parameter spaces have
been studied extensively in the context of the sequence model.  In the
sequence model, given an index set $J$, 
\begin{equation}\label{seq}
z_j = \th_j + \d_j, \ \ j \in J,
\end{equation}  
are observed, $\bth = (\th_j)_{j \in J}$ is
an unknown parameter, and $\dd = (\d_j)_{j \in J}$ is a random error.
The sequence model is extremely
flexible, and many existing results about the Gaussian sequence model
(where the coordinates of $\dd$ are iid Gaussian random variables) have implications for
high-dimensional linear models \citep{pinsker1980optimal,
   cavalier2002sharp}.  However, these results tend to apply
 in linear models where one conditions on the predictors, as opposed
to random predictor models like the one considered here.   

In order to prove the main result in this paper (Theorem 1), we study
a sequence model with non-Gaussian orthogonally invariant errors that is equivalent to the linear model (\ref{lm}).
\cite{goldenshluger2001adaptive} also studied a non-Gaussian sequence
model  that derives from a high-dimensional linear model with random
 predictors, but their results have limitations in settings where $d/n
 \to \rho > 0$, as discussed in Section 2.2 above.  In our analysis,
 orthogonal invariance is heavily exploited to obtain precise results
 in any asymptotic setting where $d \to \infty$.
 This appears to be a key difference between our analysis and Goldenshluger and Tsybakov's.

\subsection{Minimax problems over
  $\ell^2$-spheres and orthogonal equivariance}

Define the $\ell^2$-sphere of radius $c$, $S(c) = S_d(c) = \{\bb \in
\R^d; \ ||\bb|| = c\}$.  Though it is common in dense estimation
problems to study the minimax risk over $\ell^2$-balls $R^{(b)}(c)$,
which is one of the primary objects of study here, we find it convenient and informative to consider a closely related quantity, the minimax risk over $S(c)$,
\[
R^{(s)}(c) = R^{(s)}_{d,n}(c) = \inf_{\hat{\bb}} \sup_{\bb \in S(c)} R(\hat{\bb},\bb)
\]
(the superscript ``$s$'' in $R^{(s)}(c)$ stands for ``sphere'').  For
our purposes, the primary significance of considering $\ell^2$-spheres
comes from connections with orthogonal invariance and equivariance.  Let $O(d)$
denote the group of $d \times d$ orthogonal matrices.  

\vspace{.1in} 
{\em Definition 1.} An estimator $\hat{\bb} = \hat{\bb}(\y,X)$ for $\bb$ is {\em orthogonally equivariant} if 
\begin{equation}\label{orthequiv}
U^T\hat{\bb}(\y,X) = \hat{\bb}(\y,XU)
\end{equation}
for all $U \in O(d)$.  \hfill $\Box$

\vspace{.1in}

Orthogonally equivariant estimators are compatible with orthogonal
transformations of the predictor basis.  They may be appropriate when
there is little information carried in the given predictor basis
vis-\`a-vis the outcome; by contrast, knowledge about sparsity
is exactly one such piece of information.  Indeed, sparsity
assumptions generally imply that in the given basis some predictors are significantly
more influential than others.  Sparse estimators
attempt to take advantage of this to improve
performance and are typically not orthogonally equivariant.   

The concept of equivariance
plays an important role in statistical
decision theory (e.g. \citep{berger1985statistical}, Chapter 6).
However, it seems to have received relatively little attention in the
context of linear models.  Significant aspects of equivariance
include: (i) in certain cases, one can show that
it suffices to consider equivariant estimators when studying minimax
problems and (ii) equivariance may provide a
convenient means for identifying minimax estimators.  This is
basically the content of the Hunt-Stein theorem and both of these
features prevail in the present circumstances.  To make this more
precise, define the class of equivariant estimators
\[
\E = \E(n,d) = \{\hat{\bb}; \hat{\bb} \mbox{ is an
  orthogonally equivariant estimator for } \bb\}
\]
and define
\[
R^{(e)}(\bb) = R^{(e)}_{d,n}(\bb) = \inf_{\hat{\bb} \in \E} R(\hat{\bb},\bb).  
\]
Additionally, let $\pi_c$
denote the uniform measure on $S(c)$ and let 
\[
\hat{\bb}_{unif}(c) = \hat{\bb}_{unif}(\y,X;c) = E_{\pi_c}(\bb|\y,X)
\]
be the posterior mean of $\bb$ under the assumption that $\bb \sim
\pi_c$ is independent of $(\ee,X)$.  Since, for $U \in O(d)$,
\[
U^T\hat{\bb}_{unif}(\y,X;c)  =  E_{\pi_c}(U^T\bb|\y,X) 
 = 
 E_{\pi_c}(\bb|\y,XU)  
 = \hat{\bb}_{unif}(\y,XU;c),
\]
it follows that $\hat{\bb}_{unif}(c) \in \E$. The next result
follows directly from the Hunt-Stein theorem and its proof is omitted.  

\begin{prop}  Suppose that $||\bb|| = c$.  Then
\begin{equation} \label{prop1a}
R^{(s)}(c) = R^{(e)}(\bb) = R\{\hat{\bb}_{unif}(c),\bb\}.
\end{equation}
Furthermore, if $\hat{\bb} \in \E$, then $R(\hat{\bb},\bb)$
depends on $\bb$ only through $c$.  
\end{prop}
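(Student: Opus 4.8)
The plan is to recognize the entire estimation problem as invariant under the compact group $G = O(d)$ and then to invoke the averaging argument underlying the Hunt--Stein theorem, which in the compact case reduces to an elementary computation. First I would pin down the group action. Writing the data as $(\y,X)$, the natural action is $U\cdot(\y,X) = (\y,XU^T)$ on the sample space and $U\cdot\bb = U\bb$ on the parameter space. The crucial verification is that the model family is invariant: if $(\y,X)\sim P_{\bb}$ then $(\y,XU^T)\sim P_{U\bb}$. This holds because the rows of $X$ are $\iidsim N(0,I)$ and the errors $\iidsim N(0,1)$, so that $XU^T$ has iid $N(0,I)$ rows (hence the same law as $X$) and is independent of $\ee$, while $\y = X\bb + \ee = (XU^T)(U\bb) + \ee$; this is precisely where the orthogonal invariance built into assumption (\ref{norm}) is indispensable. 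Invariance of the squared-error loss under $(\hat{\bb},\bb)\mapsto(U\hat{\bb},U\bb)$ is immediate.

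Given invariance of the family and the loss, I would next show that any $\hat{\bb}\in\E$ has risk constant on $G$-orbits. Using the identity $E_{U\bb}f(\y,X) = E_{\bb}f(\y,XU^T)$ (a restatement of the invariance just established) with $f(\y,X) = ||\hat{\bb}(\y,X) - U\bb||^2$, together with the equivariance relation $\hat{\bb}(\y,XU^T) = U\hat{\bb}(\y,X)$ and loss invariance, yields $R(\hat{\bb},U\bb) = R(\hat{\bb},\bb)$ for every $U\in O(d)$. Since $O(d)$ acts transitively on $S(c)$, this immediately gives the final assertion of the proposition: for $\hat{\bb}\in\E$ the risk $R(\hat{\bb},\bb)$ depends on $\bb$ only through $c$.

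To establish the three-way equality I would use that $\pi_c$ is the unique $O(d)$-invariant probability measure on $S(c)$ and that $\hat{\bb}_{unif}(c)$, being the posterior mean under $\pi_c$, is the Bayes estimator for squared-error loss and lies in $\E$ (as already noted in the excerpt). For the lower bound, any estimator $\hat{\bb}$ satisfies $\sup_{\bb\in S(c)}R(\hat{\bb},\bb) \geq \int R(\hat{\bb},\bb)\,d\pi_c(\bb) \geq \int R\{\hat{\bb}_{unif}(c),\bb\}\,d\pi_c(\bb) = R\{\hat{\bb}_{unif}(c),\bb\}$, the last two steps using that the Bayes estimator minimizes Bayes risk and that $\hat{\bb}_{unif}(c)$ has constant risk on $S(c)$. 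Taking the infimum gives $R^{(s)}(c)\geq R\{\hat{\bb}_{unif}(c),\bb\}$, while the reverse inequality follows because $\hat{\bb}_{unif}(c)$ is a single competitor whose supremal risk over $S(c)$ equals its value at $\bb$, so $R^{(s)}(c)\leq \sup_{\bb\in S(c)}R\{\hat{\bb}_{unif}(c),\bb\} = R\{\hat{\bb}_{unif}(c),\bb\}$. This proves $R^{(s)}(c) = R\{\hat{\bb}_{unif}(c),\bb\}$. Finally, for $R^{(e)}(\bb)$, membership $\hat{\bb}_{unif}(c)\in\E$ gives $R^{(e)}(\bb)\leq R\{\hat{\bb}_{unif}(c),\bb\}$, while constancy of the risk of any $\hat{\bb}\in\E$ on $S(c)$ gives $R(\hat{\bb},\bb) = \sup_{\bb'\in S(c)}R(\hat{\bb},\bb')\geq R^{(s)}(c)$, hence $R^{(e)}(\bb)\geq R^{(s)}(c)$; combining yields the full chain.

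The main obstacle, and the only step requiring genuine care, is the invariance verification in the first paragraph: one must check that the joint action on $(\y,X,\bb)$ leaves the model distribution unchanged, which is exactly where the Gaussian design assumption is essential and is what distinguishes this random-predictor setting from the usual fixed-design sequence model. Everything after that is the routine compact-group specialization of Hunt--Stein, together with standard measure-theoretic facts (integrability of the risk over the bounded set $S(c)$, existence and uniqueness of the invariant probability measure $\pi_c$, and the characterization of the posterior mean as the Bayes rule), none of which present difficulty here.
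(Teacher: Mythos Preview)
Your proposal is correct and is exactly the argument the paper has in mind: the paper omits the proof, stating only that it ``follows directly from the Hunt--Stein theorem,'' and what you have written is precisely the compact-group Hunt--Stein averaging argument specialized to this setting, with the model-invariance verification spelled out. There is nothing to add or correct.
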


In a sense, Proposition 1 completely solves the minimax problem over
$S(c)$.  On the other hand, the minimax estimator
$\hat{\bb}_{unif}(c)$ is challenging to compute and it
is desirable to identify good estimators that have a simpler form.
Moreover, though $\hat{\bb}_{unif}(c)$ solves the minimax problem over
$S(c)$, it is unclear how $R^{(s)}(c)$ relates to the minimax risk
over $\ell^2$-balls, which is a more commonly studied quantity in dense
estimation problems.  Finally, the minimax estimator
$\hat{\bb}_{unif}(c)$ depends on $c = ||\bb||$, which is
typically unknown in practice.  All of these issues must be addressed
in order to identify practical estimators that perform well in dense
problems for 
high-dimensional linear models.
This is accomplished below, where we show: (i) a linear estimator (ridge regression)
is asymptotically equivalent to $\hat{\bb}_{unif}(c)$, (ii)
$R^{(b)}(c) \sim R^{(s)}(c)$ (i.e. $R^{(b)}(c)/R^{(s)}(c) \to 1$), and (iii) under
certain conditions $c = ||\bb||$ may be effectively estimated.
Similar results have been obtained for the Gaussian sequence model with iid errors
\citep{marchand1993estimation, beran1996stein}.  Our results
rely on an inequality of Marchand's (Proposition 11 below) and extend
Marchand's and Beran's results to linear models with Gaussian
predictors.  

Proposition 1 and the related discussion imply that equivariant
estimators have certain nice properties and are closely linked with dense estimation problems.  On the
other hand, the
next result describes some of the limitations of orthogonally
equivariant estimators when $d > n$ and is indicative of some of the
challenges inherent in dense estimation problems beyond the
consistency requirement $d/n \to 0$.

\begin{lemma} Suppose that $\hat{\bb} =
\hat{\bb}(\y,X) \in \E$.  Then $\hat{\bb}$ is orthogonal to the
null-space of $X$.  
\end{lemma}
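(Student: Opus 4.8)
The plan is to exploit equivariance under precisely those orthogonal transformations that leave the design matrix $X$ unchanged. Write $N = N(X) = \{\vv \in \R^d : X\vv = \mathbf{0}\}$ for the null-space of $X$, so that $N^\perp$ is the row space of $X$, spanned by $\x_1,\dots,\x_n$; note that $N$ is nontrivial precisely when $\rank(X) < d$, which is the interesting regime $d > n$. First I would identify the stabilizer of $X$ under the right action $X \mapsto XU$. For orthogonal $U$, the $i$-th row of $XU$ is $\x_i^T U$, so $XU = X$ is equivalent to $U^T\x_i = \x_i$ for every $i$, i.e. to $U$ fixing $N^\perp$ pointwise. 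Any such $U$ necessarily maps $N$ into itself and may restrict to an arbitrary element of $O(N)$ there; conversely, every $U$ that is the identity on $N^\perp$ and orthogonal on $N$ satisfies $XU = X$.

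The key step is then immediate from Definition 1. Fixing a realization $(\y,X)$ and taking any $U$ in this stabilizer, equivariance gives $U^T\hat{\bb}(\y,X) = \hat{\bb}(\y,XU) = \hat{\bb}(\y,X)$, so that $\hat{\bb}(\y,X)$ is fixed by $U^T$. Decomposing $\hat{\bb} = \mathbf{p} + \mathbf{q}$ with $\mathbf{p} \in N^\perp$ and $\mathbf{q} \in N$, and using that $U^T$ is the identity on $N^\perp$ (since $U^T = U^{-1}$ also fixes $N^\perp$ pointwise), this reads $\mathbf{p} + U^T\mathbf{q} = \mathbf{p} + \mathbf{q}$, hence $U^T\mathbf{q} = \mathbf{q}$. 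As $U$ ranges over the stabilizer, $U^T$ ranges over all of $O(N)$, so $\mathbf{q}$ is a vector of $N$ fixed by every orthogonal transformation of $N$; choosing $U$ to act as $-\mathrm{id}$ on $N$ forces $\mathbf{q} = -\mathbf{q}$, whence $\mathbf{q} = \mathbf{0}$. Therefore $\hat{\bb}(\y,X) = \mathbf{p} \in N^\perp$, which is exactly the assertion that $\hat{\bb}$ is orthogonal to the null-space of $X$.

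This argument is elementary linear algebra once the right group elements are chosen, so I do not expect a serious obstacle; the one point that deserves care is that the null-space component is killed by a \emph{reflection} rather than a rotation. Invoking $-\mathrm{id}_N \in O(N)$ is what rules out a nonzero fixed vector even in the borderline case $\dim N = 1$, where the rotation subgroup alone would be trivial. The degenerate case $N = \{\mathbf{0}\}$ (i.e. $\rank(X) = d$) is vacuous. Finally, no measurability or regularity is needed beyond Definition 1 itself: the equivariance identity holds for each $(\y,X)$ simultaneously in $U$, so the entire argument is carried out pointwise in $(\y,X)$.
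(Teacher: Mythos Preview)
Your proof is correct and follows essentially the same approach as the paper: both exploit the fact that orthogonal matrices acting as the identity on the row space of $X$ (and arbitrarily on the null space) leave $X$ unchanged, so equivariance forces the null-space component of $\hat{\bb}$ to be fixed by all of $O(N)$ and hence to vanish. The only cosmetic difference is that the paper passes to explicit SVD coordinates first (writing $X = UDV^T$ and using block-diagonal $W$), whereas you work directly with the stabilizer subgroup in a coordinate-free way; the underlying idea and the concluding step (take the reflection $-\mathrm{id}$ on $N$) are identical.
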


\begin{proof}  Suppose that $\mathrm{rank}(X) = r < d$ and let $X = UDV^T$ be the singular value decomposition of
$X$, where $U \in O(n)$, $V \in O(d)$, 
\[
D = \left(\begin{array}{cc} D_0 &  0 \\ 0 & 0 \end{array}\right)
\]
is an
$n \times d$ matrix, and $D_0$ is an $r \times r$ diagonal matrix with
rank $r$.
Let $V_0$ denote the first $r$ columns of $V$ and let $V_1$ denote the
remaining $d-r$ columns of $V$.  Finally, suppose that $W_1 \in
O(d-r)$ and let
\[
W = \left(\begin{array}{cc} I & 0 \\ 0 & W_1 \end{array}\right) \in
O(d).  
\]
Then the null space of $X$ is equal to the column space of $V_1$ and it
suffices to show that $V_1^T\hat{\bb} = 0$.  By equivariance,
\begin{equation}\label{lemma1proof0}
\hat{\bb} = VW\hat{\bb}(\y,XVW) = VW\hat{\bb}(\y,UD). 
\end{equation}
Thus,
\begin{equation}\label{lemma1proof1}
V_1^T\hat{\bb} = V_1^TVW\hat{\bb}(\y,UD) = \left(0 \  \ W\right)\hat{\bb}(\y,UD).
\end{equation}
Since $\hat{\bb}(\y,UD)$ does not depend on $W$ and
(\ref{lemma1proof1}) holds for all $W \in O(d-r)$, it follows that
$V_1^T\hat{\bb} = 0$, as was to be shown.
\end{proof}

  Lemma 1 is a non-estimability result for
orthogonally equivariant estimators.  It will be used in
Sections 3.3 and 6 below.   

\subsection{Linear estimators: Ridge regression}

Linear estimators play an important role in dense estimation
problems in many statistical settings. Fundamental references include
\citep{stein1955inadmissibility, james1961estimation, pinsker1980optimal}.  \cite{pinsker1980optimal} showed that under certain
conditions, linear estimators in the Gaussian sequence model are asymptotically minimax over $\ell^2$-ellipsoids.   In the linear model, linear
estimators have the form $\hat{\bb} = A\y$, where $A$ is a data-dependent $d
\times n$ matrix, and they 
are convenient because of their simplicity. Define the ridge regression estimator
\[
\hat{\bb}_r(c) = (X^TX + d/c^2 I)^{-1}X^T\y, \ \ c \in [0,\infty].
\]
By convention, we take $\hat{\bb}_r(0) = 0$ and $\hat{\bb}_r(\infty) = \hat{\bb}_{ols} =
(X^TX)^{-1}X^T\y$ to be the ordinary least squares (OLS) estimator.
Furthermore, throughout the paper, if a matrix $A$ is not invertible, then $A^{-1}$
is taken to be its Moore-Penrose pseudoinverse (thus, the OLS
estimator is defined for all $d,n$).  
Clearly, $\hat{\bb}_r(c)$ is a linear estimator.  Furthermore, it is
easy to check that
$\hat{\bb}_r(c) \in \E$.  

\cite{dicker2012dense} studied finite
sample and asymptotic properties of 
$R\{\hat{\bb}_r(c),\bb\}$.   Some of these properties will be used in
this paper and are summarized presently.  

\subsubsection{Oracle estimators} 
Define the oracle ridge
regression estimator
\[
\hat{\bb}_r^* = \hat{\bb}_r(||\bb||).
\]  
This estimator is called an oracle estimator
because it depends on $||\bb||$, which is
typically unknown.   Proposition 5 of \citep{dicker2012dense} implies
\begin{equation}\label{ridgerisk}
R(\hat{\bb}_r^*,\bb) = \inf_{c \in [0,\infty]}
R\{\hat{\bb}_r(c),\bb\} = E\tr(X^TX + d/||\bb||^2I)^{-1}
\end{equation}
and, furthermore,
\begin{equation}\label{ridgemono}
R\{\hat{\bb}_r(||\bb||),\bb_0\}  \leq R\{\hat{\bb}_r(||\bb||),\bb\},
\mbox{ if }  ||\bb_0|| \leq ||\bb||.
\end{equation}
The next result gives an expression for the asymptotic predictive risk of
$\hat{\bb}_r^*$.  Its proof relies heavily on properties of the
Mar\v{c}enko-Pastur distribution \citep{marcenko1967distribution, bai1993convergence}.  

\begin{prop}[Proposition 8 from \citep{dicker2012dense}]
Suppose that $0 < \rho^- \leq d/n \leq \rho^+ < \infty$ for some fixed
constants $\rho^-,\rho^+ \in \R$ and define
\[
r_{>0}(\rho,c)  =  \frac{1}{2\rho}\left[c^2(\rho - 1) - \rho + \sqrt{\{c^2(\rho - 1) - \rho\}^2 + 4c^2\rho^2}\right].
\] 
\begin{itemize}
\item[(a)] If $0 < \rho^- < \rho^+ < 1$ or $1 < \rho^- < \rho^+ < \infty$ and $n - d > 5$, then
\[
\left| R(\hat{\bb}_r^*,\bb) - r_{>0}(d/n,||\bb||)\right| =
O\left(\frac{||\bb||^2}{ ||\bb||^2 + 1}n^{-1/4}\right).
\]
\item[(b)] If $0 < \rho^- < 1 < \rho^+ < \infty$, then
\[
\left| R(\hat{\bb}_r^*,\bb) - r_{>0}(d/n,||\bb||)\right| = O(||\bb||^2 n^{-5/48}).
\]
\end{itemize} 
\end{prop}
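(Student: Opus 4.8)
The plan is to begin from the exact identity~(\ref{ridgerisk}), which gives $R(\hat{\bb}_r^*,\bb) = E\tr(X^TX + (d/c^2)I)^{-1}$ with $c = ||\bb||$, and to recast it as a statement about the empirical spectral distribution of the sample covariance matrix $S = n^{-1}X^TX$. Writing $\rho = d/n$ and factoring out $n$, one has
\[
\tr(X^TX + (d/c^2)I)^{-1} = \rho \int \frac{dF_d(x)}{x + \rho/c^2} = \rho\, m_d(-\rho/c^2),
\]
where $F_d$ is the (random) empirical spectral distribution of $S$ and $m_d$ its Stieltjes transform, evaluated at the negative real point $z = -\rho/c^2$. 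Since the entries of $X$ are iid $N(0,1)$, $F_d$ converges to the Mar\v{c}enko--Pastur law $F_\rho$ \citep{marcenko1967distribution}, whose Stieltjes transform $m_\rho$ is the root of $\rho z m^2 + (z+\rho-1)m + 1 = 0$ satisfying $m_\rho(z) > 0$ for $z$ below the support. Evaluating this root at $z = -\rho/c^2$ and multiplying by $\rho$ reproduces exactly $r_{>0}(\rho,c)$: this is a direct algebraic check, matching the discriminant $\{c^2(\rho-1)-\rho\}^2 + 4c^2\rho^2$ with $c^4\{(z+\rho-1)^2 - 4\rho z\}$ and selecting the sign that keeps $m_\rho > 0$. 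This pins down the deterministic target, and it remains only to bound the error.

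Because the target is an expectation, no concentration inequality is needed; the remaining quantity is the bias $|E\,\rho m_d(-\rho/c^2) - \rho m_\rho(-\rho/c^2)|$, which measures the distance between the expected spectral distribution $EF_d$ and the limit $F_\rho$. The natural tool is Bai's quantitative bound on the Kolmogorov distance $\sup_x|EF_d(x) - F_\rho(x)|$ \citep{bai1993convergence}, which is of order $n^{-1/4}$ when $\rho$ stays bounded away from $1$. Writing the difference of Stieltjes transforms as $\int (x-z)^{-1}\,d(EF_d - F_\rho)(x)$ and integrating by parts, one controls the integral by $\sup_x|EF_d - F_\rho|$ times the total variation of the kernel $(x-z)^{-1}$; since $z = -\rho/c^2$ is off the support $[0,\infty)$, both the kernel and its derivative are bounded in terms of the distance $\rho/c^2$ to the support. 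Tracking this distance and the overall scale is what produces the weight $||\bb||^2/(||\bb||^2+1) = c^2/(c^2+1)$ in part~(a): for small $c$ both risk and error vanish, while for large $c$ the bound saturates. Finiteness and integrability of the inverse spectrum — in particular in the regime $1 < \rho^- \le \rho^+$, where the pseudoinverse of $X^TX$ is an inverse-Wishart — is exactly where the hypothesis $n-d>5$ is used.

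The \emph{main obstacle} is part~(b), where $\rho^- < 1 < \rho^+$ and so $d/n$ may lie arbitrarily close to $1$. At $\rho = 1$ the left edge of the Mar\v{c}enko--Pastur support, $(1-\sqrt\rho)^2$, touches the origin, so the evaluation point $z = -\rho/c^2$ approaches the hard edge of the spectrum and the integrand $(x + \rho/c^2)^{-1}$ is dominated by the smallest eigenvalues of $S$. The bulk rate $n^{-1/4}$ no longer applies uniformly there, and one must instead combine the sharper but slower near-edge convergence estimates — which carry the exponent $5/48$ — with quantitative lower bounds on the smallest eigenvalue of $S$ (equivalently, smallest-singular-value bounds for a nearly-square Gaussian matrix) in order to control the contribution of the extreme eigenvalues to $E(\lambda_{\min}(S) + \rho/c^2)^{-1}$. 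Establishing these near-square edge estimates uniformly over $\rho \in [\rho^-,\rho^+]$ straddling $1$ is the delicate step, and it is precisely what degrades the rate from $n^{-1/4}$ to $n^{-5/48}$; the dichotomy in the statement reflects whether the aspect ratio stays on one side of $1$ (soft edge, part~(a)) or is allowed to cross it (hard edge, part~(b)).
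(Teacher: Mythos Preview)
The paper does not prove this proposition: it is imported verbatim as ``Proposition~8 from \citep{dicker2012dense}'' and accompanied only by the one-line remark that ``Its proof relies heavily on properties of the Mar\v{c}enko--Pastur distribution \citep{marcenko1967distribution, bai1993convergence}.'' There is therefore no proof in the present paper against which to compare your sketch. Your outline is consistent with that hint and with the two references it gives: you start from the exact identity $R(\hat{\bb}_r^*,\bb)=E\tr(X^TX+d/c^2I)^{-1}$, rewrite the trace as $\rho\,m_d(-\rho/c^2)$ via the Stieltjes transform of the empirical spectral distribution of $n^{-1}X^TX$, identify the deterministic target $r_{>0}(\rho,c)$ with $\rho\,m_\rho(-\rho/c^2)$ for the Mar\v{c}enko--Pastur law, and then invoke Bai's quantitative Kolmogorov-distance rate for $EF_d-F_\rho$ to control the remainder. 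That is the natural route and almost certainly the one taken in \citep{dicker2012dense}.

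Two caveats. First, your account remains a sketch at the places that matter: you assert rather than derive the weight $c^2/(c^2+1)$ in part~(a), and for part~(b) you gesture at ``near-edge convergence estimates which carry the exponent $5/48$'' without a concrete source or computation. The exponent $5/48$ is unusual enough that its provenance has to be tracked in \citep{dicker2012dense} itself; nothing in the present paper supplies it. Second, watch the parsing of the hypothesis in part~(a): the clause ``$n-d>5$'' is vacuous when $1<\rho^-\le\rho^+$ (since then $d>n$), so either it attaches only to the first disjunct $\rho^+<1$ or it is a slip for a condition like $|n-d|>5$. Your explanation ties the moment condition to the inverse-Wishart regime $\rho^->1$, which is the right intuition for why a gap between $d$ and $n$ is needed, but literally $n-d>5$ cannot hold there.
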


Notice that Proposition 2 implies the asymptotic predictive risk
of $\hat{\bb}_r^*$ is non-vanishing if $d/n \to \rho > 0$.  
The main results in this paper are essentially asymptotic optimality
results for $\hat{\bb}_r^*$.   In particular,
we show that $\hat{\bb}_r^*$ is asymptotically minimax
over $\ell^2$-balls and $\ell^2$-spheres, and 
asymptotically optimal among the class of orthogonally equivariant
estimators. Combined with Propositions 2-3, these results immediately yield
sharp asymptotic for $R^{(b)}(c)$, $R^{(s)}(c)$, and $R^{(e)}(\bb)$.  

Taking a Bayesian point-of-view, our optimality results for $\hat{\bb}_r^*$ are not surprising.  Indeed, in Section 2.3 we observed that if $||\bb|| = c$, then $\hat{\bb}_{unif}(c) = E_{\pi_c}(\bb|\y,X)$ is minimax
over $S(c)$ and is optimal among orthogonally equivariant estimators
for $\bb$.  On the other hand, if $||\bb|| = c$, then the oracle ridge estimator
$\hat{\bb}_r^* = \hat{\bb}_r(c) = E_{N(0,c^2/dI)}(\bb|\y,X)$
may be interpreted
as the posterior mean of $\bb$
under the assumption that $\bb \sim N\{0,(c^2/d)I\}$ is
independent of $\ee$ and $X$.  Furthermore, if $d$ is
large, then the normal distribution $N\{0,(c^2/d)I\}$ is ``close'' to
$\pi_c$ (there is an enormous body of literature that makes this idea
more precise -- \cite{diaconis1987dozen} attribute early work to
\cite{borel1914introduction} and \cite{levy1922lecons}).  Thus, it is
reasonable to expect that $\hat{\bb}_{unif}(c) \approx \hat{\bb}_r(c) $
and that, asymptotically, the oracle ridge estimator shares the
optimality properties of $\hat{\bb}_{unif}(c)$, which is indeed the
case.  

\subsubsection{Adaptive estimators} 

Adaptive ridge estimators will also be discussed in this paper.  As mentioned above, $||\bb||$ is typically unknown; hence,
$\hat{\bb}_r^*$ is typically  non-implementable.  However,
$\hat{\bb}_r^*$ may be approximated by an adaptive estimator where
$||\bb||$ is replaced with an estimate -- this estimator ``adapts'' to
the unknown quantity $||\bb||$.  Define 
\[
\widehat{||\bb||}^2 = \max\left\{\frac{||\y||^2}{n} - 1,0\right\}
\]
and define the adaptive ridge estimator
\begin{equation}\label{adapt}
\check{\bb}_r^* = \hat{\bb}_r(\widehat{||\bb||}).
\end{equation}
Note that $\widehat{||\bb||}^2$ is a consistent estimator of
$||\bb||^2$, as $n \to \infty$.  

\begin{prop} Suppose that $0 < \rho^- \leq d/n \leq \rho^+ < 1$ for
  some fixed constants $\rho^-,\rho^+ \in \R$.   If $n - d > 5$, then 
\[
 \left|R(\hat{\bb}_r^*,\bb) -
  R(\check{\bb}_r^*,\bb)\right| = O\left(\frac{1}{||\bb||^2 +
  1}n^{-1/2}\right).
\]
\end{prop}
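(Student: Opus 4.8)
The plan is to compare the two estimators directly through the loss as a function of the regularization radius. Write $c = ||\bb||$, $\hat c = \widehat{||\bb||}$, and for a fixed dataset let $\ell(c') = ||\hat\bb_r(c') - \bb||^2$, so that $R(\hat\bb_r^*,\bb) = E_\bb\ell(c)$ and $R(\check\bb_r^*,\bb) = E_\bb\ell(\hat c)$. Using the explicit representation $\hat\bb_r(c') - \bb = -(d/c'^2)(X^TX + (d/c'^2) I)^{-1}\bb + (X^TX + (d/c'^2) I)^{-1}X^T\ee$, the map $c'\mapsto \hat\bb_r(c')$ is smooth on $(0,\infty)$. First I would record the law of $\hat c$: since $y_1,\dots,y_n \iidsim N(0,c^2+1)$, one has $||\y||^2/(c^2+1)\sim\chi^2_n$, whence $E|\hat c^2 - c^2|^k = O\{(c^2+1)^k n^{-k/2}\}$ for the moments needed below, with the truncation event $\{\hat c = 0\} = \{||\y||^2/n < 1\}$ set aside for separate treatment.

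Next I would split the risk difference into a second-order term and a correlation term. Decompose
\[
\ell(\hat c) - \ell(c) = ||\hat\bb_r(\hat c) - \hat\bb_r(c)||^2 + 2\langle \hat\bb_r(\hat c) - \hat\bb_r(c),\ \hat\bb_r(c) - \bb\rangle.
\]
The squared term is $O\{(\hat c - c)^2\}$ uniformly over the relevant range of $c'$, using bounds on $\partial_{c'}\hat\bb_r(c')$ that follow from controlling the extreme eigenvalues of $X^TX$ (bounded away from $0$ since $d/n \le \rho^+ < 1$, exactly as in the Mar\v{c}enko--Pastur arguments behind Proposition 2, with $n - d > 5$ supplying the requisite inverse-Wishart moments); taking expectations yields a contribution of order $n^{-1}$. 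For the cross term, a first-order expansion of $\hat\bb_r(\hat c) - \hat\bb_r(c)$ about $c$ produces a leading piece $2\langle (\hat c - c)\partial_{c'}\hat\bb_r(c),\ \hat\bb_r(c) - \bb\rangle = (\hat c - c)\,\partial_{c'}\ell(c)$. The crucial point is that $c$ minimizes the deterministic risk $c'\mapsto E_\bb\ell(c')$ by (\ref{ridgerisk}), so (interchanging derivative and expectation, justified by the explicit quadratic form) $E_\bb\{\partial_{c'}\ell(c)\} = \partial_{c'}E_\bb\ell(c')|_{c'=c} = 0$; hence this piece equals the covariance $\Cov_\bb\{\hat c,\ \partial_{c'}\ell(c)\}$, the second-order remainder of the path expansion again being $O\{(\hat c-c)^2\}$.

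Controlling this covariance is the main obstacle, and it is what fixes the rate at $n^{-1/2}$ (the second-order terms being faster). Here I would exploit that, for $d < n$, $\hat\bb_r(c')$ depends on $\y$ only through its projection $\hat\y$ onto the column space of $X$, whereas $||\y||^2 = ||\hat\y||^2 + ||\y-\hat\y||^2$ and, conditionally on $X$, the residual norm $||\y-\hat\y||^2 \sim \chi^2_{n-d}$ is independent of $\hat\y$. Thus the only channel of correlation between $\hat c$ and $\partial_{c'}\ell(c)$ runs through $||\hat\y||^2$; conditioning on the residual reduces the task to a covariance between two functions of $\hat\y$. Bounding it by Cauchy--Schwarz, facilitated by this decoupling, together with the moment estimates for $\hat c - c$ from the first step and a direct variance computation for $\partial_{c'}\ell(c)$ (again via Gaussianity and inverse-Wishart moments), yields the claimed $O\{(c^2+1)^{-1}n^{-1/2}\}$ bound.

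Finally I would dispose of the truncation event $\{\hat c = 0\}$, on which $\check\bb_r^* = 0$ and $\ell(\hat c) = ||\bb||^2 = c^2$. For $c$ bounded away from $0$ this event has exponentially small probability (a left-tail deviation of $\chi^2_n/n$ below $1/(c^2+1) < 1$) and is negligible; for small $c$ it is not rare, but there the target bound is of order $n^{-1/2}$ while the loss gap $|c^2 - \ell(c)|$ is correspondingly small, so a crude estimate suffices, the factor $(c^2+1)^{-1}$ in the stated rate reflecting precisely this regime. Assembling the second-order term, the covariance term, and the truncation term gives the asserted estimate.
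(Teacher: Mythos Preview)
The paper does not actually prove this proposition: it states that ``the proof of Proposition 3 is nearly identical to the proof of Proposition 10 from [Dicker (2012)] and is omitted.'' So a direct comparison with the paper's own argument is impossible; one can only assess your plan on its merits.

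Your strategy is sound at the structural level. The key device---that $c=\|\bb\|$ minimizes $c'\mapsto E_{\bb}\ell(c')$ by (\ref{ridgerisk}), so the first-order term in a Taylor expansion of $\ell(\hat c)-\ell(c)$ has mean zero and only a covariance survives---is exactly the right way to exploit the oracle property, and the second-order pieces are indeed of smaller order once eigenvalue control on $X^TX$ is in place.

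That said, the sketch is thin precisely where the work lies, and as written it does not deliver the uniform factor $(\|\bb\|^2+1)^{-1}$. First, expanding in $\hat c-c$ rather than in $\hat c^{\,2}-c^2$ (or in the ridge parameter $\lambda=d/c'^2$) introduces a spurious singularity at $c=0$: the delta-method relation $\hat c-c\approx(\hat c^{\,2}-c^2)/(2c)$ degenerates, $\partial_{c'}\hat\bb_r(c')$ blows up like $c'^{-3}$, and your small-$c$ discussion addresses only the event $\{\hat c=0\}$, not the regime where both $c$ and $\hat c$ are small but positive. Parametrizing by $\lambda$ avoids this entirely. Second, the covariance step is asserted, not argued: Cauchy--Schwarz yields $\sqrt{\Var(\hat c)}\sqrt{\Var(\partial_{c'}\ell(c))}$, and tracking the $c$-dependence through both factors to land on $(c^2+1)^{-1}$ uniformly in $c$ requires bookkeeping you have not indicated. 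The decoupling of $\|\y-\hat\y\|^2$ from $\hat\y$ is a correct and useful observation, but it does not by itself rescue a crude Cauchy--Schwarz bound; at best it lets you condition out the residual and reduce to a conditional covariance between two functionals of $\hat\y$, which is still a nontrivial computation. Until those two points are carried out, you have a plausible $O(n^{-1/2})$ bound but not the claimed $O\{(\|\bb\|^2+1)^{-1}n^{-1/2}\}$.
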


The proof of Proposition 3 is nearly identical to the proof of
Proposition 10 from \citep{dicker2012dense} and is omitted.  Proposition 3 implies that if $d/n \to \rho \in (0,1)$, then the
adaptive ridge estimator has nearly the same asymptotic risk as the
oracle ridge estimator.  Note the restriction $d/n < 1$ in
Proposition 3.  This restriction also appears in
\citep{dicker2012dense}, where $\Var(\e_i) = \s^2$ is unknown and the
signal-to-noise ratio $||\bb||^2/\s^2$, as opposed to $||\bb||^2$, is
the quantity that must be estimated to obtain an adaptive ridge
estimator; in this context, $d/n < 1$ is a fairly natural condition
for estimating $\s^2$.  It is possible to extend Proposition 3 to
settings where $d/n > 1$.  However, if $d/n > 1$, then the
corresponding error term in Proposition 3 is no longer uniformly
bounded in $||\bb||^2$.  Additionally, notice that Proposition 3 does not apply to
settings where $d/n \to 0$.  A more careful analysis may lead to
extensions in this direction as well.  Since adaptive estimation is not the main
focus of this article, these issues are not pursued further here; however,
future research into these issues may prove interesting.

\subsection{Outline of the paper}

The main results of the paper are stated in Section 3.  Most of these
results follow from Theorem 1, which is stated
at the beginning of the section.  The remainder
of the paper is devoted to proving Theorem 1.  In
Section 4, the equivalence between the linear model and the sequence
model is formalized.  The first part of Theorem 1, which applies to
the setting where $d \leq n$, is proved in Section 5.  This part of
the proof involves
converting error bounds for the Gaussian sequence model with iid
errors into useful bounds for the relevant non-Gaussian sequence model.  The
second part of Theorem 1 ($d > n$) is proved in Section 6.  When $d >
n$, $X^TX$ does not have full rank.  The major steps in the proof for $d > n$
involve reducing the problem to a full rank problem.  

\section{Main results}

The results in this section are presented in terms of the linear model. However, most have
equivalent formulations in terms of the sequence model introduced in
Section 4 below.  

\begin{thm} Suppose that $n > 2$ and let $s_1 \geq \cdots
\geq s_{d \wedge n} > 0$ denote the nonzero (with probability 1) eigenvalues of $(X^TX)^{-1}$.  
\begin{itemize}
\item[(a)] If $d \leq n$, then
\[
\left|R(\hat{\bb}_r^*,\bb) - R^{(e)}(\bb)\right| \leq 
\frac{1}{d}E\left\{\frac{s_1}{s_d}\tr\left(X^TX +
    \frac{d}{||\bb||^2}I\right)^{-1}\right\} 
\]
\item[(b)] If $d > n$, then 
\begin{eqnarray*}
\left|R(\hat{\bb}_r^*,\bb) - R^{(e)}(\bb)\right|&\leq & 
\frac{1}{n}E\left\{\frac{s_1}{s_n}\tr\left(XX^T +
    \frac{d}{||\bb||^2}I\right)^{-1}\right\} \\
&& \ +
2\frac{d-n}{n-2}\frac{1}{||\bb||^2}E\tr\left(XX^T
    + \frac{d}{||\bb||^2}I\right)^{-2}.
\end{eqnarray*}
\end{itemize}\end{thm}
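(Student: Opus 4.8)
The plan is to exploit that both $\hat{\bb}_r^*$ and the exact minimax rule $\hat{\bb}_{unif}(c)$ (with $c = ||\bb||$) are orthogonally equivariant Bayes rules --- the former for the prior $N\{0,(c^2/d)I\}$, the latter for $\pi_c$ --- and to control the regret incurred by substituting one prior for the other. Since $\hat{\bb}_r^* \in \E$, Proposition 1 already gives $R^{(e)}(\bb) \le R(\hat{\bb}_r^*,\bb)$, so it suffices to bound the nonnegative gap $R(\hat{\bb}_r^*,\bb) - R^{(e)}(\bb)$ from above. Because the risk of every equivariant estimator is constant on $S(c)$ (Proposition 1), I would rewrite this gap as the difference of the $\pi_c$-averaged risks of $\hat{\bb}_r^*$ and $\hat{\bb}_{unif}(c)$ --- that is, as the Bayes regret, under the prior $\pi_c$, of the Gaussian-prior rule relative to the $\pi_c$-Bayes rule.

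For part (a), with $d \le n$ the matrix $X^TX$ is almost surely invertible, so I would condition on $X$ and pass to the sequence model of Section 4: $\hat{\bb}_{ols} = \bb + \bg$ is sufficient, with $\bg \mid X \sim N\{0,(X^TX)^{-1}\}$, and both $\hat{\bb}_r^*$ and $\hat{\bb}_{unif}(c)$ are functions of $(\hat{\bb}_{ols},X)$. Rotating into the eigenbasis $V$ of $X^TX$ decouples this into $d$ independent scalar Gaussian observations whose noise variances are the eigenvalues $s_1 \ge \cdots \ge s_d$ of $(X^TX)^{-1}$; in these coordinates $\hat{\bb}_r^*$ is exactly the coordinatewise Bayes rule for the Gaussian prior, while $\hat{\bb}_{unif}(c)$ is the Bayes rule for the (rotated) sphere prior, which by rotation invariance is again $\pi_c$. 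I would then invoke Marchand's inequality (Proposition 11) to bound, conditionally on $X$, the regret of the Gaussian-prior rule against the sphere-prior rule. Marchand's bound is naturally stated for homoscedastic noise, so the remaining work is to pass from the heteroscedastic conditional model to a homoscedastic one; bounding the spread of the noise variances by their ratio $s_1/s_d$ and then taking expectation over $X$ is what produces the stated $\frac{1}{d}E\{(s_1/s_d)\tr(X^TX + (d/||\bb||^2)I)^{-1}\}$.

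For part (b), $d > n$ and $X^TX$ is singular. Here I would first use Lemma 1: every equivariant estimator is orthogonal to the null space of $X$, so writing $P = V_0V_0^T$ for the projection onto the row space, the loss splits as $||\hat{\bb} - \bb||^2 = ||\hat{\bb} - P\bb||^2 + ||(I-P)\bb||^2$ for every $\hat{\bb} \in \E$. The null-space term has expectation $\frac{d-n}{d}||\bb||^2$ and, being common to both estimators, cancels in the gap, reducing the problem to comparing $\hat{\bb}_r^*$ and $\hat{\bb}_{unif}(c)$ as estimators of the $n$-dimensional vector $P\bb$ --- a full-rank problem now governed by $XX^T$ rather than $X^TX$. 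Conditioning $\pi_c$ on the null-space radius expresses it as a mixture of uniform priors on row-space spheres of radius $\sqrt{||\bb||^2 - ||(I-P)\bb||^2}$, so applying the part-(a) analysis (sufficiency, rotation, Marchand) inside each mixture component and integrating yields the first term, with $n$, $s_1/s_n$ and $XX^T$ replacing their part-(a) analogues. The second term arises because the oracle ridge uses the fixed shrinkage $d/||\bb||^2$ tuned to the full radius, whereas the optimal shrinkage for the random reduced radius differs; quantifying this mismatch --- whose size is controlled by the $d-n$ missing dimensions and by $E\tr(XX^T + (d/||\bb||^2)I)^{-2}$ --- produces the correction $2\frac{d-n}{n-2}\frac{1}{||\bb||^2}E\tr(XX^T + (d/||\bb||^2)I)^{-2}$.

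I expect the main obstacle to be the passage from the homoscedastic form of Marchand's inequality to the heteroscedastic conditional model in part (a): the noise variances $s_1,\ldots,s_d$ are genuinely spread out, and controlling the resulting error by the condition number $s_1/s_d$ while keeping the trace term intact is the delicate step. In part (b) the analogous difficulty is compounded by the need to track the random reduced radius through the mixture and to bound the ridge-parameter mismatch sharply enough to recover exactly the stated second term rather than a cruder bound.
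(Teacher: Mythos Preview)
Your architecture is essentially the paper's: pass to the sequence model, use Marchand's inequality for the homoscedastic case, and for part (b) project onto the row space via Lemma 1 and reduce to a full-rank $n$-dimensional problem. The reduction in part (b) matches the paper's Lemmas 2--3 closely, including the interpretation of the second term as arising from the mismatch between the fixed ridge penalty $d/||\bb||^2$ and the one appropriate to the random reduced radius (the paper handles this via Jensen's inequality and the identity $\int_{S_d(c)} ||P_0\t||^{-2}\,d\pi_c(\t) = (d-2)/\{c^2(n-2)\}$).

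The genuine gap is exactly the one you flag as ``the main obstacle'': you have no mechanism for the heteroscedastic-to-homoscedastic passage in part (a). Rotating into the eigenbasis gives independent Gaussians with variances $s_1,\ldots,s_d$, but the $\pi_c$-Bayes rule in that heteroscedastic model is \emph{not} Marchand's estimator for any single noise level, and there is no direct comparison argument that produces the factor $s_1/s_d$ while preserving the trace. The paper's resolution is the pair Brown's identity (Proposition 12) plus Stam's inequality (Proposition 13). Brown's identity rewrites the Bayes risk of $\hat{\bth}_{unif}(c)$ as $E\tr(\S) - E\tr\{\S^2 I_\S(\bth + \S^{1/2}\dd_0)\}$, turning the risk comparison into a Fisher-information comparison. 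One then writes, conditionally on $\S$,
\[
\bth + \S^{1/2}\dd_0 \;=\; \bigl\{\bth + (\gamma s_m)^{1/2}\dd_1\bigr\} \;+\; (\S - \gamma s_m I)^{1/2}\dd_2,
\]
with $\dd_1,\dd_2$ independent $N(0,I)$ and $0<\gamma<1$. Stam's inequality bounds the Fisher information of this sum in terms of the Fisher informations of the two pieces; the second piece is Gaussian with known information, and the first is exactly a homoscedastic sphere-prior model to which Marchand's inequality (converted back through Brown's identity) applies. The condition number $s_1/s_m$ emerges from the algebra after letting $\gamma\uparrow 1$. Without Brown and Stam --- which the paper presents as its main technical novelty --- your proposal does not close.
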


From (\ref{ridgerisk}) and Proposition 1, it is clear that
$R(\hat{\bb}_r^*,\bb)$ and $R^{(e)}(\bb)$ are finite.  Moreover, basic
properties of the Wishart and inverse Wishart distributions imply that
the upper bounds in Theorem 1 are finite, provided $|n-d| >
1$; when $|n - d| \leq 1$, these bounds are infinite.  However, if $|n-d| \leq 1$, then the inequalities 
$R_{d,n}(\hat{\bb}_r^*,\bb) \leq R_{d,n-1}(\hat{\bb}_r^*,\bb)$
 and $R^{(e)}_{d,n}(\bb) \leq R^{(e)}_{d,n-1}(\bb)$ may be combined
 with Theorem 1 (b) to obtain nontrivial bounds.  

In what remains of
 this section, we discuss some of the consequences of Theorem 1 and related
 results in three asymptotic settings: $d/n \to 0$ (with $d \to \infty$, as well), $d/n \to \rho \in (0,\infty)$, and $d/n \to \infty$.  

\subsection{$d/n \to 0$} 

\begin{prop} Define 
\[
r_0(\rho,c) = \frac{c^2\rho}{c^2 + \rho}.
\]
If $d/n \to 0$ and $d \to \infty$, then 
\[
R(\hat{\bb}_r^*,\bb) \sim R^{(e)}(\bb) \sim R^{(s)}(||\bb||) \sim
R^{(b)}(||\bb||) \sim r_0(d/n,||\bb||)
\]
uniformly for $\bb \in \R^d$.  
\end{prop}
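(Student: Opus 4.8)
The plan is to sandwich the four risks between $R^{(e)}(\bb)$ and $R(\hat{\bb}_r^*,\bb)$ and then to show that both ends are asymptotic to $r_0(d/n,||\bb||)$. Since $d/n\to 0$ we may work along the sequence where $d<n$ (indeed $n-d\to\infty$), so that Theorem~1(a) and the inverse-Wishart moment formulas for $X^TX$ are available; the case $\bb=0$ is trivial, so fix $\bb\neq 0$ and write $c=||\bb||$. First I record the ordering. Proposition~1 gives $R^{(e)}(\bb)=R^{(s)}(c)$, and $S(c)\subseteq B(c)$ gives $R^{(s)}(c)\le R^{(b)}(c)$. For the upper end, consider the fixed-radius ridge estimator $\hat{\bb}_r(c)\in\E$: by the monotonicity (\ref{ridgemono}) its risk over $B(c)$ is maximized on $S(c)$, where it equals $R(\hat{\bb}_r^*,\bb)$ (since $\hat{\bb}_r(c)=\hat{\bb}_r^*$ when $||\bb||=c$ and, by Proposition~1, the risk depends on $\bb$ only through $c$), so $R^{(b)}(c)\le R(\hat{\bb}_r^*,\bb)$. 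Thus
\[
R^{(e)}(\bb)=R^{(s)}(c)\le R^{(b)}(c)\le R(\hat{\bb}_r^*,\bb),
\]
and it suffices to prove, uniformly over $\bb\in\R^d$, that (i) $R(\hat{\bb}_r^*,\bb)\sim r_0(d/n,c)$ and (ii) $R(\hat{\bb}_r^*,\bb)-R^{(e)}(\bb)=o(r_0(d/n,c))$.

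For (i), set $\gamma=d/c^2$, so that by (\ref{ridgerisk}) $R(\hat{\bb}_r^*,\bb)=E\sum_i(\lambda_i+\gamma)^{-1}$, where $\lambda_{\max}=\lambda_1\ge\cdots\ge\lambda_d=\lambda_{\min}>0$ are the eigenvalues of $X^TX$, and a direct substitution gives $r_0(d/n,c)=d/(n+\gamma)$. Since $\sum_i(n+\gamma)/(\lambda_i+\gamma)=d+\sum_i(n-\lambda_i)/(\lambda_i+\gamma)$ and $\gamma\ge 0$,
\[
\Big|\frac{R(\hat{\bb}_r^*,\bb)}{r_0(d/n,c)}-1\Big|\le E\Big[\frac1d\sum_i\frac{|n-\lambda_i|}{\lambda_i}\Big]\le\Big(\frac{n^2}{d}E\tr(X^TX)^{-2}-\frac{2n}{d}E\tr(X^TX)^{-1}+1\Big)^{1/2},
\]
the last step by Cauchy--Schwarz and Jensen. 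The bound is free of $\gamma$, hence uniform in $\bb$; inserting the standard values $E\tr(X^TX)^{-1}=d/(n-d-1)$ and $E\tr(X^TX)^{-2}=d(n-1)/\{(n-d)(n-d-1)(n-d-3)\}$ and expanding shows the quantity under the root is $O(d/n+1/n)\to 0$.

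For (ii), Theorem~1(a) bounds the gap by $\tfrac1d E\{(\lambda_{\max}/\lambda_{\min})\tr(X^TX+\gamma I)^{-1}\}$ (using $s_1/s_d=\lambda_{\max}/\lambda_{\min}$). Dividing by $r_0=d/(n+\gamma)$ and using $\tr(X^TX+\gamma I)^{-1}\le d/(\lambda_{\min}+\gamma)$ reduces the task to showing
\[
\frac{n+\gamma}{d}\,E\Big[\frac{\lambda_{\max}}{\lambda_{\min}(\lambda_{\min}+\gamma)}\Big]\to 0\qquad\text{uniformly in }\gamma\ge 0.
\]
I would split on $\{\lambda_{\min}\ge n/2\}$, where $\lambda_{\min}+\gamma\ge(n+\gamma)/2$ forces the integrand to be at most $4\lambda_{\max}/(dn)$, so that $E\lambda_{\max}=O(n)$ makes this contribution $O(1/d)\to 0$ uniformly; on the complement I would bound the integrand by $2n\lambda_{\max}/(d\lambda_{\min}^2)$ and apply Cauchy--Schwarz to peel off a polynomially bounded factor $E[\lambda_{\max}^2\lambda_{\min}^{-4}]^{1/2}$ (finite since $n-d\to\infty$) from $P(\lambda_{\min}<n/2)^{1/2}$, which is exponentially small because for $d/n\to 0$ the smallest eigenvalue concentrates near $n(1-\sqrt{d/n})^2>n/2$.

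Combining the sandwich with (i) and (ii) shows that both $R^{(e)}(\bb)$ and $R(\hat{\bb}_r^*,\bb)$, and hence all four risks between them, divide $r_0(d/n,||\bb||)$ to $1$ uniformly over $\bb$. The main obstacle is step (ii): although the prefactor $1/d$ in Theorem~1(a) leaves a great deal of room, obtaining a bound uniform in $\gamma$ (equivalently in $||\bb||$, including the OLS limit $\gamma\to 0$) forces one to control the lower tail of $\lambda_{\min}$ and the upper moments of $\lambda_{\max}$ in the regime $d/n\to 0$. The event decomposition above is designed to isolate the rare small-$\lambda_{\min}$ configurations---where the regularization $\gamma$ no longer protects the trace---and dispatch them by a large-deviation estimate; making these extreme-eigenvalue bounds rigorous (via the Mar\v{c}enko--Pastur machinery already invoked for Proposition~2) is the technical heart of the argument.
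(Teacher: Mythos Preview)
Your overall architecture is exactly the paper's: sandwich
\[
R^{(e)}(\bb)=R^{(s)}(c)\le R^{(b)}(c)\le R(\hat{\bb}_r^*,\bb)
\]
via Proposition~1 and the monotonicity~(\ref{ridgemono}), then (i) $R(\hat{\bb}_r^*,\bb)\sim r_0$ and (ii) the gap from Theorem~1(a) is $o(r_0)$. The differences are purely in how (i) and (ii) are executed, and in both places the paper's argument is considerably shorter.

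For (i), the paper uses Jensen's inequality directly on~(\ref{ridgerisk}): convexity of $A\mapsto\tr(A)^{-1}$ on positive definite matrices and $E(X^TX)=nI$ give the lower bound $d/(n+d/c^2)=r_0$, while concavity of $t\mapsto(t+d/c^2)^{-1}$ together with $E\tr(X^TX)^{-1}=d/(n-d-1)$ gives the upper bound $d/\{n-d-1+d/c^2\}$; the ratio of these tends to $1$ uniformly since $d/n\to 0$. This replaces your Cauchy--Schwarz computation and the explicit second inverse-Wishart moment.

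For (ii), the paper avoids your event decomposition and the large-deviation bound on $\lambda_{\min}$ altogether. Writing $s_1=1/\lambda_{\min}$, $s_d=1/\lambda_{\max}$, one checks the elementary inequality
\[
\tr\!\left(X^TX+\tfrac{d}{c^2}I\right)^{-1}\le \frac{c^2}{c^2+d/n}\Big(ds_1+\tfrac{d}{n}\Big),
\]
so the Theorem~1(a) bound is at most $\dfrac{r_0(d/n,c)}{d}\cdot\dfrac{n}{d}\,E\!\left\{\dfrac{s_1}{s_d}\Big(ds_1+\tfrac{d}{n}\Big)\right\}$. The paper then quotes moment bounds $E(s_1^k)=O(n^{-k})$, $E(s_d^{-k})=O(n^k)$ (Lemma~A2 in \cite{dicker2012dense}) to conclude that the last expectation is $O(d/n)$, hence the whole thing is $O(r_0/d)$, uniformly in $c$. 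Your approach also works, but it trades these off-the-shelf moment bounds for a concentration estimate on $\lambda_{\min}$ that, while standard, is a heavier tool than the problem requires; the $1/d$ prefactor in Theorem~1(a) already does all the work once $r_0$ is factored out.
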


\begin{proof} If $d + 1< n$, then (\ref{ridgerisk}) and Jensen's inequality imply that
\begin{equation}\label{jensen}
\frac{d/n}{1 + d/(n||\bb||^2)} \leq R(\hat{\bb}_r^*,\bb) \leq
\frac{d/n}{1 - (d + 1)/n + d/(n||\bb||^2)}.
\end{equation}
It follows that $R(\hat{\bb}_r^*,\bb) \sim
r_0(d/n,||\bb||)$.  By Theorem 1, in order
to prove 
\begin{equation}\label{prop2proof1}
R^{(e)}(\bb) \sim r_0(d/n,||\bb||),
\end{equation}
it suffices to show that 
\[
\frac{1}{d}E\left\{\frac{s_1}{s_d}\tr\left(X^TX +
    d/||\bb||^2I\right)^{-1}\right\}  = o\{r_0(d/n,||\bb||)\}.
\]
But this is clear:
\begin{eqnarray}\label{prop2proof2}
\frac{1}{d}E\left\{\frac{s_1}{s_d}\tr\left(X^TX +
    \frac{d}{||\bb||^2}I\right)^{-1}\right\}  & \leq & 
\frac{||\bb||^2}{d(||\bb||^2 + d/n)}\\ \nonumber && \ \ \cdot E\left\{\frac{s_1}{s_d}\left(d s_1 +
  \frac{d}{n}\right) \right\} \\ \nonumber
& = & O\left\{\frac{1}{d}r_0(d/n,||\bb||)\right\} \\
& = & o\{r_0(d/n,||\bb||)\}, \nonumber
\end{eqnarray}
where we have used the facts $E(s_1^k) = O(n^{-k})$ and $E(s_d^{-k}) =
O(n^{k})$ (Lemma A2, \citep{dicker2012dense}).  
Thus, (\ref{prop2proof1}).  Since $R^{(s)}(||\bb||) = R^{(e)}(\bb)$,
all that is left is to prove is $R^{(b)}(||\bb||) \sim
R^{(s)}(||\bb||)$.  This follows because
\begin{equation}\label{prop2proof3}
R^{(s)}(||\bb||) \leq R^{(b)}(||\bb||) \leq R(\hat{\bb}_r^*,\bb)
\sim R^{(s)}(||\bb||),
\end{equation}
where we have used (\ref{ridgemono}) to obtain the second inequality.  
\end{proof}

The asymptotic
risk $r_0(\rho,c)$ appears frequently in the analysis of linear
estimators for the Gaussian sequence model \citep{pinsker1980optimal} and is often referred to as
the ``linear minimax risk.''  The condition $d \to \infty$ in
Proposition 4 is important
because it drives the approximation $\pi_c \approx N(0,c^2/dI)$, which
enables us to conclude $R^{(e)}(\bb) \sim R(\hat{\bb}_r^*,\bb)$ (re:
the discussion at the end of Section 2.4).  Notice that $\lim_{d/n \to
  0} r_0(\rho,c) = 0$.  Thus, the minimax risk vanishes when $d/n \to 0$.  

Proposition 4 implies that the ridge estimator $\hat{\bb}_r^*$ is
asymptotically minimax if $d/n \to 0$ and $d \to \infty$.  On the other hand, other
simple linear estimators are also asymptotically minimax in this
setting.  Define the estimator 
\[
\hat{\bb}_{scal}^* = \frac{1 - (d + 1)/n}{1 - (d+1)/n +
  d/(n||\bb||^2)}\hat{\bb}_{ols}.
\]  Note that
$\hat{\bb}_{scal}^*$ is a scalar multiple of the OLS estimator and
that $\hat{\bb}_{scal}^*$ is defined for all $d,n$ since
$\hat{\bb}_{ols}$ is defined using pseudoinverses.  Various versions of $\hat{\bb}_{scal}$ have been studied
previously \citep{stein1960multiple, baranchik1973inadmissibility,
  brown1990ancillarity}.  \cite{dicker2012dense} showed that if $d + 1 <
n$, then
\begin{eqnarray} \label{ridgejs}
R(\hat{\bb}_r^*,\bb) & \leq & R(\hat{\bb}_{scal}^*,\bb) = \frac{d/n}{1 - (d+1)/n +
  d/(n||\bb||^2)} \\ \nonumber
& \leq & R(\hat{\bb}_{ols},\bb) = \frac{d/n}{1 - (d + 1)/n}.
\end{eqnarray}
The following corollary to Proposition 4 follows immediately.

\begin{cor}
\begin{itemize}
\item[(a)] If $d/n \to 0$ and $d \to \infty$, then
\[
R(\hat{\bb}_{scal}^*,\bb) \sim R^{(b)}(||\bb||)
\]
uniformly for $\bb \in \R^d$.  
\item[(b)] If $d/n \to 0$, $d \to \infty$, and $d/(n||\bb||^2) \to s
  \geq 0$,  then 
\[
\frac{R(\hat{\bb}_{ols},\bb)}{R^{(b)}(||\bb||)} \to 1 + s.
\]
\end{itemize} \end{cor}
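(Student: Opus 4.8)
The plan is to read off both parts directly from the explicit risk formulas in (\ref{ridgejs}) -- valid for all large $n$, since $d/n \to 0$ forces $d + 1 < n$ eventually -- together with the conclusion $R^{(b)}(||\bb||) \sim r_0(d/n,||\bb||)$ of Proposition 4. The first step is to rewrite the limiting risk in a form that exposes the comparison, namely
\[
r_0(d/n,||\bb||) = \frac{d/n}{1 + d/(n||\bb||^2)},
\]
so that it differs from the scaled-OLS risk $R(\hat{\bb}_{scal}^*,\bb) = (d/n)/\{1 - (d+1)/n + d/(n||\bb||^2)\}$ only through the vanishing term $(d+1)/n$ in the denominator.

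For part (a), I would form the ratio
\[
\frac{R(\hat{\bb}_{scal}^*,\bb)}{r_0(d/n,||\bb||)} = \frac{1 + d/(n||\bb||^2)}{1 - (d+1)/n + d/(n||\bb||^2)}.
\]
Writing $a = d/(n||\bb||^2) \geq 0$ and $\epsilon = (d+1)/n$, this equals $(1+a)/\{(1+a) - \epsilon\} = 1/\{1 - \epsilon/(1+a)\}$. Since $\epsilon/(1+a) \leq \epsilon \to 0$ as $d/n \to 0$, the ratio tends to $1$ uniformly in $a \geq 0$, hence uniformly in $\bb \in \R^d$. Combining with the uniform equivalence $r_0(d/n,||\bb||) \sim R^{(b)}(||\bb||)$ furnished by Proposition 4 then yields $R(\hat{\bb}_{scal}^*,\bb) \sim R^{(b)}(||\bb||)$ uniformly.

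For part (b), the argument is pointwise under the extra hypothesis $d/(n||\bb||^2) \to s$. I would factor
\[
\frac{R(\hat{\bb}_{ols},\bb)}{R^{(b)}(||\bb||)} = \frac{R(\hat{\bb}_{ols},\bb)}{r_0(d/n,||\bb||)} \cdot \frac{r_0(d/n,||\bb||)}{R^{(b)}(||\bb||)},
\]
where the second factor tends to $1$ by Proposition 4, and the first equals $\{1 + d/(n||\bb||^2)\}/\{1 - (d+1)/n\}$, which converges to $(1+s)/1 = 1 + s$ because $(d+1)/n \to 0$ and $d/(n||\bb||^2) \to s$.

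There is no serious obstacle here; the only point demanding a little care is the uniformity claim in part (a), where $||\bb||$ may be arbitrarily small and $a = d/(n||\bb||^2)$ correspondingly large. This is precisely what the rewriting $1/\{1 - \epsilon/(1+a)\}$ handles, since the perturbation $\epsilon/(1+a)$ is dominated by $\epsilon$ alone, uniformly in $a \geq 0$. The remaining manipulations are routine algebra, consistent with the assertion that the corollary follows immediately from Proposition 4 and (\ref{ridgejs}).
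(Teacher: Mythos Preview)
Your argument is correct and is precisely the approach the paper indicates: the corollary is stated as following immediately from Proposition 4 together with the explicit risk identities in (\ref{ridgejs}), and you have merely written out the algebra that the paper leaves to the reader. Your handling of the uniformity in part (a) via the bound $\epsilon/(1+a)\le\epsilon$ is exactly the care needed to justify the ``uniformly for $\bb\in\R^d$'' clause.
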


In other words, if $d/n\to 0$ and $d \to \infty$, then
$\hat{\bb}_{scal}$ is asymptotically minimax over $\ell^2$-balls (and,
moreover, asymptotically equivalent to $\hat{\bb}_r^*$).  Furthermore,
the OLS estimator may be asymptotically minimax over $\ell^2$-balls,
but this depends on the magnitude of the signal $\bb$:  If $||\bb||^2$ is
large, then the OLS estimator is asymptotically minimax; if $||\bb||^2$ is
small, then it is not.  

\subsection{$d/n \to \rho \in (0,\infty)$}

The setting where $d/n \to \rho \in (0,\infty)$ may be the
most interesting one for the dense estimation problems considered
here.  The minimax risk is non-vanishing in this setting; however, informative
closed form expressions for the limiting minimax risk are available.  Moreover, differences
between the linear estimators $\hat{\bb}_{scal}^*$ and $\hat{\bb}_r^*$
which are insignificant when $d/n \to 0$ become pronounced when $d/n
\to \rho \in (0,\infty)$.  These differences are largely attributable to the spectral
distribution of $n^{-1}X^TX$, which is asymptotically trivial if $d/n
\to 0$ and converges to the Mar\v{c}enko-Pastur distribution
\citep{marcenko1967distribution} if $d/n \to
\rho \in(0,\infty)$.  

\begin{prop} Suppose that $\rho \in (0,\infty)$ and let
$R^*(\bb)$ denote any of $R(\hat{\bb}_r^*,\bb)$, $R^{(e)}(\bb)$,
$R^{(s)}(||\bb||)$, or $R^{(b)}(\bb)$.  If $\rho
\neq 1$, then 
\begin{equation}\label{prop4a}
\lim_{d/n \to \rho} \sup_{\bb \in \R^d} \left|R^*(\bb) -
  r_{>0}(d/n,||\bb||)\right| = 0,
\end{equation}
where $r_{>0}(\rho,c)$ is defined in Proposition 2 above.  
Furthermore, as $d/n \to \rho$, 
\begin{equation}\label{prop4b}
R(\hat{\bb}_r^*,\bb) \sim R^{(e)}(\bb) \sim R^{(s)}(||\bb||) \sim
R^{(b)}(||\bb||) \sim r_{>0}(d/n,||\bb||).
\end{equation}
If $\rho \neq 1$, then the implied convergence in (\ref{prop4b}) holds
uniformly for $\bb \in \R^d$; if $\rho = 1$, then the convergence is
uniform over $B(c)$ for any fixed $c \in (0,\infty)$.  
\end{prop}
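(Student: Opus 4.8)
The plan is to prove the four relations in (\ref{prop4b}) by chaining through the pivot $r_{>0}(d/n,||\bb||)$, treating $\rho\neq1$ and $\rho=1$ separately. Two ingredients will do most of the work: Proposition 2, which already pins $R(\hat{\bb}_r^*,\bb)$ to $r_{>0}$, and Theorem 1, which controls $|R(\hat{\bb}_r^*,\bb)-R^{(e)}(\bb)|$. Two elementary bounds on the trace will be used repeatedly, namely $\tr(X^TX+\frac{d}{||\bb||^2}I)^{-1}\le\min\{||\bb||^2,\,(d\wedge n)s_1\}$ (and the analogue with $XX^T$), together with the inverse Wishart moment bounds $E(s_1^k)=O(n^{-k})$, $E(s_d^{-k})=O(n^k)$ of Lemma A2 in \citep{dicker2012dense} and their $XX^T$ analogues; crucially these, and the boundedness of the expected condition number $E(s_1/s_d)=O(1)$, hold precisely when $\rho$ is bounded away from $1$. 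I will also use the elementary facts that $r_{>0}(\rho,c)\sim c^2$ as $c\to0$ and that $r_{>0}(\rho,\cdot)$ is increasing and bounded below away from $0$ for $c$ bounded away from $0$.

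\textbf{The case $\rho\neq1$.} First I would dispatch the ridge estimator: Proposition 2(a) gives $\sup_\bb|R(\hat{\bb}_r^*,\bb)-r_{>0}(d/n,||\bb||)|=O(n^{-1/4})$ since $||\bb||^2/(||\bb||^2+1)\le1$, and dividing the same bound by $r_{>0}$ (which is $\sim||\bb||^2$ for small $||\bb||$ and bounded below otherwise) upgrades this to the relative statement $R(\hat{\bb}_r^*,\bb)\sim r_{>0}$ uniformly over $\R^d$. Next I would pass to $R^{(e)}(\bb)$ via Theorem 1. For $d\le n$ the right-hand side is at most $\min\{E(s_1^2/s_d),\,(||\bb||^2/d)E(s_1/s_d)\}=\min\{O(n^{-1}),O(||\bb||^2/d)\}$, where the two alternatives use $\tr(\cdots)^{-1}\le ds_1$ and $\tr(\cdots)^{-1}\le||\bb||^2$ respectively, the moment bounds above, and Cauchy--Schwarz for $E(s_1^2/s_d)$. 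For $d>n$ the same substitutions bound the first term of Theorem 1(b), while the second is handled by the pointwise inequality $u/(\lambda_i+u)^2\le1/(4\lambda_i)$ with $u=d/||\bb||^2$ (giving a uniform $O(n^{-1})$ and a small-signal $O(||\bb||^2/n)$ bound). Dividing by $R(\hat{\bb}_r^*,\bb)\asymp r_{>0}$ shows the difference is $o(R(\hat{\bb}_r^*,\bb))$ uniformly in $\bb$, so $R^{(e)}(\bb)\sim R(\hat{\bb}_r^*,\bb)\sim r_{>0}$. Since $R^{(s)}(||\bb||)=R^{(e)}(\bb)$ by Proposition 1 and (\ref{ridgemono}) yields the sandwich $R^{(s)}(c)\le R^{(b)}(c)\le R(\hat{\bb}_r^*,\bb_c)$ for any $\bb_c$ with $||\bb_c||=c$, all four quantities lie within $O(n^{-1/4})$ of $r_{>0}$ uniformly over $\R^d$, giving (\ref{prop4a}) and (\ref{prop4b}) for $\rho\neq1$.

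\textbf{The case $\rho=1$: setup.} Here the condition number $s_1/s_d$ of $X^TX$ diverges as $\rho\to1$ and the moment bounds fail, so Theorem 1 no longer controls $|R^{(e)}(\bb)-R(\hat{\bb}_r^*,\bb)|$; this is the crux of the proof. The ridge estimator is still harmless: Proposition 2(b) gives $|R(\hat{\bb}_r^*,\bb)-r_{>0}(d/n,||\bb||)|=O(||\bb||^2n^{-5/48})$, and since $r_{>0}(1,||\bb||)\gtrsim||\bb||^2$ on $B(c)$, dividing yields $R(\hat{\bb}_r^*,\bb)\sim r_{>0}$ uniformly over $B(c)$. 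For $R^{(e)}(\bb)$ I would instead exploit the monotonicity relations $R^{(e)}_{d,n}(\bb)\le R^{(e)}_{d,n-1}(\bb)$ noted after Theorem 1. Fix $\epsilon>0$; using joint continuity together with the common leading behaviour $r_{>0}(\rho,c)\sim c^2$ (so that $r_{>0}(\rho,c)/c^2$ extends to a continuous, strictly positive function on a compact $(\rho,c)$-region), choose $\delta>0$ so small that $r_{>0}(1\pm\delta,c')$ lies within a factor $1\pm\epsilon$ of $r_{>0}(1,c')$ for all $c'\in(0,c]$. Setting $n_-=\lfloor d/(1+\delta)\rfloor$ and $n_+=\lceil d/(1-\delta)\rceil$, one has $d/n_\mp\to1\pm\delta\neq1$ and, for all large $n$, $n_-<n<n_+$, whence $R^{(e)}_{d,n_+}(\bb)\le R^{(e)}_{d,n}(\bb)\le R^{(e)}_{d,n_-}(\bb)$.

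\textbf{The case $\rho=1$: conclusion and main obstacle.} I would then apply the $\rho\neq1$ result already established to the two auxiliary sequences $(d,n_-)$ and $(d,n_+)$, whose aspect ratios tend to $1+\delta$ and $1-\delta$, to get $R^{(e)}_{d,n_\mp}(\bb)\sim r_{>0}(d/n_\mp,||\bb||)$ uniformly over $\bb\in\R^d\supseteq B(c)$. Combining the sandwich with the choice of $\delta$ and the relative continuity of $r_{>0}$ in $\rho$ (uniform over $B(c)$, including near $\bb=0$, via the function $r_{>0}(\rho,c)/c^2$) gives $(1-\epsilon)(1+o(1))\le R^{(e)}_{d,n}(\bb)/r_{>0}(d/n,||\bb||)\le(1+\epsilon)(1+o(1))$ uniformly over $B(c)$; letting $\epsilon\to0$ yields $R^{(e)}(\bb)\sim r_{>0}$. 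Proposition 1 transfers this to $R^{(s)}$, and the sandwich $R^{(s)}(c)\le R^{(b)}(c)\le R(\hat{\bb}_r^*,\bb_c)$ transfers it to $R^{(b)}$, completing (\ref{prop4b}) uniformly over $B(c)$. The main difficulty is exactly this $\rho=1$ step, where the error constants in Theorem 1 and Proposition 2 blow up as $\rho\to1$; the device that makes it tractable is to \emph{avoid} tracking that blow-up by sandwiching $R^{(e)}_{d,n}$ between two auxiliary models with fixed aspect ratios $1\pm\delta$ and sending $\delta\to0$ only afterwards.
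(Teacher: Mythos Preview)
Your proposal for the case $\rho\neq1$ is essentially the paper's argument: Proposition~2 handles $R(\hat{\bb}_r^*,\bb)$, Theorem~1 controls $|R(\hat{\bb}_r^*,\bb)-R^{(e)}(\bb)|$ via the same moment bounds $E(s_1^k)=O(n^{-k})$, $E(s_d^{-k})=O(n^k)$, Proposition~1 gives $R^{(s)}=R^{(e)}$, and the sandwich $R^{(s)}\le R^{(b)}\le R(\hat{\bb}_r^*,\cdot)$ from~(\ref{ridgemono}) closes the loop. Your bookkeeping (splitting the Theorem~1 bound into a ``large-signal'' and ``small-signal'' estimate, and the inequality $u/(\lambda+u)^2\le1/(4\lambda)$ for the second term when $d>n$) is correct and yields the same $O\{||\bb||^2/(n(||\bb||^2+1))\}$ that the paper records.

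The case $\rho=1$ is where you genuinely diverge from the paper. The paper simply asserts that the equivalences for $R^{(e)}$ ``follow from~(\ref{jensen}) and~(\ref{prop2proof2})'' for all $\rho\le1$, but---as you correctly identify---the moment bounds underlying~(\ref{prop2proof2}) degrade as $d/n\to1$ (indeed the Theorem~1 bound is infinite when $|n-d|\le1$, as the paper itself notes just after Theorem~1). Your remedy is to invoke the monotonicity $R^{(e)}_{d,n_+}\le R^{(e)}_{d,n}\le R^{(e)}_{d,n_-}$, apply the already-established $\rho\neq1$ result at auxiliary aspect ratios $1\pm\delta$, and then pass $\delta\to0$ using joint continuity of $r_{>0}(\rho,c)/c^2$ on a compact $(\rho,c)$-region. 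This is a clean and correct way to circumvent the blow-up, and the paper in fact hints at exactly this device in the remark following Theorem~1 (where it suggests combining the monotonicity with Theorem~1(b) when $|n-d|\le1$), though it does not deploy it explicitly in the proof of Proposition~5. What your approach buys is an explicit, self-contained argument at $\rho=1$ that does not rely on any control of $E(s_1/s_d)$ near the critical ratio; the paper's approach is terser but leaves the $\rho=1$ case under-justified as written.
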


\begin{proof} Proposition 2 implies that $|R(\hat{\bb}_r^*,\bb) -
r_{>0}(d/n,||\bb||)| \to 0$ and $R(\hat{\bb}_r^*,\bb) \sim r_{>0}(d/n,||\bb||)$,
with the appropriate uniformity conditions when $\rho \neq 1$ or $\rho = 1$.  For
$\rho \leq 1$, the asymptotic equivalences $|R^{(e)}(\bb) - R(\hat{\bb}_r^*,\bb)| \to 0$ and $R^{(e)}(\bb) \sim
R(\hat{\bb}_r^*,\bb)$ follow from
(\ref{jensen}) and (\ref{prop2proof2}); to prove the equivalences for
$\rho > 1$, notice that
\[
\begin{array}{l}\dfrac{1}{n}E\left\{\dfrac{s_1}{s_n}\tr(XX^T +
  d/c^2I)^{-1}\right\} \\
\qquad + 2\dfrac{d-n}{c^2(n-2)}E\tr(XX^T + d/c^2I)^{-2}\end{array}=
O\left\{\frac{||\bb||^2}{n(||\bb||^2 + 1)}\right\}.
\]
Since $R^{(e)}(\bb) = R^{(s)}(||\bb||)$, it suffices to show that
\[
\lim_{d/n\to \rho} \sup_{\bb \in \R^d} \left|R^{(s)}(||\bb||) - R^{(b)}(||\bb||)\right|
= 0
\]
and that $R^{(s)}(||\bb||) \sim R^{(b)}(||\bb||)$ uniformly for $\bb \in
\R^d$ in order to prove the proposition; both follow from (\ref{prop2proof3}).
\end{proof}

Two types of asymptotic equivalence are addressed in Proposition 5:
differences (\ref{prop4a}) and quotients (\ref{prop4b}).  The
equivalence (\ref{prop4a}) is more informative for large $||\bb||$;
(\ref{prop4b}) is more informative for small $||\bb||$.  Notice that
for fixed $||\bb|| = c \in (0,\infty)$, $\lim_{d/n \to \rho}
r_{>0}(d/n,c) = r_{>0}(\rho,c) > 0$ and it follows that (\ref{prop4a}) and
(\ref{prop4b}) are equivalent.  

For $d/n \to 0$, we saw that $\hat{\bb}_{scal}^*$ and $\hat{\bb}_r^*$
were asymptotically equivalent (and that, in some instance, both were
also 
asymptotically equivalent to the OLS estimator; Corollary 1).
When $d/n \to \rho \in (0,\infty)$, 
$\hat{\bb}_r^*$ and $\hat{\bb}_{scal}^*$ are not asymptotically
equivalent.  Indeed, (\ref{ridgejs}) implies that for $d/n \to 0$, we have
\[
R(\hat{\bb}_{scal}^*,\bb) \sim r_{scal}(d/n,||\bb||),
\]
where 
\[
r_{scal}(\rho,c) = \frac{1 - \rho}{1 - \rho + \rho/c^2}.
\]
One easily checks that for $\rho > 0$, $r_{>0}(\rho,c) \leq r_{scal}(\rho,c)$ with
equality if and only if $c = 0$.  Thus, $\hat{\bb}_{scal}^*$ is not
asymptotically minimax over $\ell^2$-balls when $d/n \to \rho \in
(0,\infty)$.  

Despite its suboptimal performance, the estimator $\hat{\bb}_{scal}^*$  
may be useful in certain situations.  Indeed, if $\Cov(\x_i) = \S \neq I$, then it is straightforward to implement a
modified version of $\hat{\bb}_{scal}^*$ with similar
properties (replace $||\bb||^2$ in $\hat{\bb}_{scal}^*$ with
$\bb^T\S\bb$); on the other hand, if $\S$ is unknown and a
norm-consistent estimator for $\S$ is not available, then this may
have a more dramatic effect on the ridge estimator $\hat{\bb}_r^*$.  This is discussed in
detail in \citep{dicker2012dense}, where it is argued that in dense problems where little
is known about $\Cov(\x_i)$, an appropriately modified version of
$\hat{\bb}_{scal}^*$ is a reasonable alternative to ridge regression
(note, for instance, that
$R(\hat{\bb}_{scal}^*,\bb)/R(\hat{\bb}_r^*,\bb) = O(1)$ if $d/n \to
\rho \in (0,\infty)$).

\subsection{$d/n\to \infty$}

Theorem 1 plays a crucial role in our asymptotic analysis when $d/n
\to \rho < \infty$.  It is less relevant in the setting where $d/n \to
\infty$.  Instead, Lemma 1 from Section 2.4 plays the key role.  We
have the following proposition.  

\begin{prop} Suppose that $d > n$ and that $\hat{\bb} \in
\E$.  Then 
\[
R(\hat{\bb},\bb) \geq \frac{d-n}{d}||\bb||^2.
\]
\end{prop}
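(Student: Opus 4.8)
The plan is to combine Lemma 1 with the orthogonal invariance of the predictor distribution. Since the rows of $X$ are iid $N(0,I)$ with $d>n$, we have $\rank(X)=n$ with probability 1, so the null-space of $X$ has dimension $d-n$. Let $P=P(X)$ denote the orthogonal projection onto the null-space of $X$, so that $\tr(P)=d-n$ almost surely. By Lemma 1, any $\hat{\bb}\in\E$ is orthogonal to the null-space of $X$, which means $P\hat{\bb}=0$.

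First I would establish a pointwise (conditional on $X$) lower bound. Writing $\hat{\bb}-\bb=(I-P)(\hat{\bb}-\bb)+P(\hat{\bb}-\bb)$ and using $P\hat{\bb}=0$ gives $P(\hat{\bb}-\bb)=-P\bb$. Since $I-P$ and $P$ project onto orthogonal subspaces, the Pythagorean identity yields
\[
||\hat{\bb}-\bb||^2 = ||(I-P)(\hat{\bb}-\bb)||^2 + ||P\bb||^2 \geq ||P\bb||^2.
\]
Taking the expectation over $(\ee,X)$ — note the right-hand side does not involve $\ee$ — gives $R(\hat{\bb},\bb)\geq E||P\bb||^2 = \bb^T E(P)\bb$.

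The remaining and only substantive step is to evaluate $E(P)$. Here I would use the orthogonal invariance of the distribution of $X$: for any $U\in O(d)$ the entries of $XU$ are again iid $N(0,1)$, so $XU\stackrel{D}{=}X$. The null-space of $XU$ equals $U^T$ applied to the null-space of $X$, so its projection is $U^TPU$; consequently $U^TPU\stackrel{D}{=}P$ and $E(P)=U^TE(P)U$ for every $U\in O(d)$. A matrix commuting in this sense with the entire orthogonal group must be a scalar multiple of the identity, and matching traces forces $E(P)=\{\tr(P)/d\}I=\{(d-n)/d\}I$. Substituting, $R(\hat{\bb},\bb)\geq\bb^T\{(d-n)/d\}I\bb=\{(d-n)/d\}||\bb||^2$, as claimed.

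I expect the main obstacle to be making the invariance argument for $E(P)$ fully rigorous — in particular, correctly tracking how the null-space transforms under $X\mapsto XU$ and justifying the ``commutes with all of $O(d)$ implies scalar multiple of $I$'' step (a short averaging-over-$O(d)$ or Schur's-lemma argument). The Pythagorean reduction and the trace computation are routine.
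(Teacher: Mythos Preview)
Your proof is correct and follows essentially the same route as the paper's: both use Lemma 1 to conclude that $\hat{\bb}$ has no component in the null-space of $X$, apply the Pythagorean decomposition to bound $||\hat{\bb}-\bb||^2$ below by $||P\bb||^2$ (the paper writes this as $||V_1^T\bb||^2$ via the SVD, which is the same thing), and then invoke the orthogonal invariance of the distribution of $X$ to compute $E||P\bb||^2=\{(d-n)/d\}||\bb||^2$. Your treatment of the last step is in fact more explicit than the paper's, which simply appeals to ``symmetry.''
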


\begin{proof} Let $X = UDV^T$ be the singular value decomposition of
$X$, as in the proof of Lemma 1. Let $V_0$ and $V_1$ be the first $r$ and the remaining $d-r$
columns of $V$, respectively, where $r = \mathrm{rank}(X)$ (note that
$r = n$ with probability 1).  Then
\begin{eqnarray}
R(\hat{\bb},\bb) & = & E||\hat{\bb} - \bb||^2 \nonumber \\
&  = & E||V_0^T(\hat{\bb} - \bb)||^2  +
E||V_1^T\bb||^2 \label{prop5proof1} \\
& \geq & E||V_1^T\bb||^2 \nonumber \\
& = & \frac{d-n}{n}||\bb||^2, \label{prop5proof2}
\end{eqnarray}
where (\ref{prop5proof1}) follows from Lemma 1 and (\ref{prop5proof2})
follows from symmetry.  \end{proof}

The proof of Proposition 6 essentially implies that for $d >n$, the squared bias
of an equivariant estimator must be at least $||\bb||^2(d-n)/d$.  This
highlights one of the major challenges in high-dimensional dense estimation
problems, especially in settings where $d \gg n$.  The next proposition, which is the main result in this
subsection, implies that if $d/n \to \infty$, then the trivial
estimator $\hat{\bb}_{null} = 0$ is asymptotically minimax. In a
sense, this means that in dense problems $\bb$ is completely non-estimable when $d/n \to
\infty$.  

\begin{prop} Let $\hat{\bb}_{null} =
0$.  Then $R(\hat{\bb}_{null},\bb) = ||\bb||^2$.  Furthermore, if $d/n \to \infty$, then
\[
R(\hat{\bb}_r^*,\bb) \sim R^{(e)}(\bb) \sim R^{(s)}(||\bb||) \sim
R^{(b)}(||\bb||) \sim R(\hat{\bb}_{null},\bb) \sim ||\bb||^2
\]
uniformly for $\bb \in \R^d$.  
\end{prop}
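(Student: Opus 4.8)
The plan is to sandwich every quantity appearing in the displayed chain between $(1-n/d)||\bb||^2$ and $||\bb||^2$, and then let $d/n\to\infty$. The very first assertion is immediate: since $\hat{\bb}_{null}=0$ is a constant, $R(\hat{\bb}_{null},\bb)=E_{\bb}||0-\bb||^2=||\bb||^2$. For the remaining equivalences I would not compute anything new but simply assemble inequalities already present in the paper.

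For the upper bound, I would exploit the convention $\hat{\bb}_r(0)=0=\hat{\bb}_{null}$ adopted for the ridge family, so that (\ref{ridgerisk}) gives
\[
R(\hat{\bb}_r^*,\bb)=\inf_{c\in[0,\infty]}R\{\hat{\bb}_r(c),\bb\}\leq R\{\hat{\bb}_r(0),\bb\}=||\bb||^2 .
\]
Combining this with the chain $R^{(s)}(||\bb||)=R^{(e)}(\bb)\leq R^{(b)}(||\bb||)\leq R(\hat{\bb}_r^*,\bb)$ recorded in (\ref{prop2proof3})—where the first equality is Proposition 1, the first inequality holds because $S(||\bb||)\subseteq B(||\bb||)$, and the second uses the monotonicity (\ref{ridgemono})—bounds all four intermediate risks above by $||\bb||^2$.

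For the lower bound I would note that $d/n\to\infty$ eventually forces $d>n$, so Proposition 6 applies to every $\hat{\bb}\in\E$; taking the infimum over $\E$ yields $R^{(e)}(\bb)\geq \frac{d-n}{d}||\bb||^2$, and hence the same lower bound propagates to every larger quantity in the chain. The two bounds together give
\[
\frac{d-n}{d}||\bb||^2\leq R^{(s)}(||\bb||)=R^{(e)}(\bb)\leq R^{(b)}(||\bb||)\leq R(\hat{\bb}_r^*,\bb)\leq ||\bb||^2
\]
for every $\bb\in\R^d$. Writing $\frac{d-n}{d}=1-n/d$, each of the ratios $R^{(e)}(\bb)/||\bb||^2$, $R^{(s)}(||\bb||)/||\bb||^2$, $R^{(b)}(||\bb||)/||\bb||^2$, and $R(\hat{\bb}_r^*,\bb)/||\bb||^2$ then lies in the interval $[1-n/d,\,1]$, a window that closes as $d/n\to\infty$ and whose width is independent of $\bb$; this yields all the claimed equivalences with $||\bb||^2=R(\hat{\bb}_{null},\bb)$, uniformly in $\bb$.

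I do not anticipate a genuine obstacle here, since the substance resides in Propositions 1 and 6 and in (\ref{ridgerisk})–(\ref{ridgemono}); the only care required is bookkeeping. Specifically, I would check that the convention $\hat{\bb}_r(0)=\hat{\bb}_{null}$ makes the upper bound exact, that Proposition 6 is invoked only in the regime $d>n$, and that the degenerate case $\bb=0$ (where each ratio is the indeterminate $0/0$) is read under the usual convention, the sandwich being vacuously tight there.
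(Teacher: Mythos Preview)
Your proposal is correct and follows essentially the same approach as the paper: sandwich $R^{(e)}(\bb)=R^{(s)}(||\bb||)\leq R^{(b)}(||\bb||)\leq R(\hat{\bb}_r^*,\bb)$ between $\frac{d-n}{d}||\bb||^2$ (from Proposition~6) and $||\bb||^2$ (from $\hat{\bb}_r(0)=\hat{\bb}_{null}$), then let $d/n\to\infty$. Your write-up is in fact more careful than the paper's terse version, explicitly flagging where Proposition~1, (\ref{ridgerisk}), (\ref{ridgemono}), and Proposition~6 enter and handling the $\bb=0$ edge case.
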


\begin{proof} Clearly, $R(\hat{\bb}_{null},\bb)= ||\bb||^2$.
It follows from Proposition 6 that for $d > n$, 
\[
\begin{array}{l}
\dfrac{d - n}{n} ||\bb||^2  \leq   R^{(e)}(\bb) 
 =  R^{(s)}(||\bb||^2) 
 \leq 
R^{(b)}(||\bb||^2) \\ \qquad \qquad \qquad \qquad \qquad \qquad
 \leq  R(\hat{\bb}_r^*,\bb) 
 \leq R(\hat{\bb}_{null},\bb) 
 =  ||\bb||^2.
\end{array}
\]
The proposition follows by dividing by $||\bb||^2$ and taking $d/n \to
\infty$.  \end{proof}

\subsection{Adaptive estimators}

The results in Section 3.1-3.3 imply that the oracle ridge estimator
$\hat{\bb}_r^* = \hat{\bb}_r(||\bb||)$ is asymptotically minimax over $\ell^2$-balls and
$\ell^2$-spheres and is asymptotically optimal among equivariant
estimators for $\bb$ in any asymptotic setting where $d \to \infty$.
The next result describes asymptotic optimality properties
of the adaptive ridge estimator $\check{\bb}_r^*$ (defined in
(\ref{adapt})), which does not depend on $||\bb||$.

\begin{prop} Suppose that $\rho \in (0,1)$ and let
$R^*(\bb)$ denote any of $R(\hat{\bb}_r^*,\bb)$, $R^{(e)}(\bb)$,
$R^{(s)}(||\bb||)$, $R^{(b)}(\bb)$, or $r_{>0}(d/n,||\bb||)$.  Let
$\{a_n\}_{n = 1}^{\infty} \subseteq \R$ denote a sequence of positive real numbers
such that $a_nn^{1/2} \to \infty$.Then 
\[
\lim_{d/n \to \rho} \sup_{\bb \in \R^d} \left|R^*(\bb) -
  R(\check{\bb}_r^*,\bb)\right| = 0 \mbox{ and } \lim_{d/n \to \rho}
\sup_{||\bb||^2 \geq a_n} \frac{R(\check{\bb}_r^*,\bb)}{R^*(\bb)} = 1.
\]
\end{prop}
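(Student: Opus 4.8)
The plan is to reduce both assertions to Proposition 3, which compares the adaptive estimator $\check{\bb}_r^*$ to the oracle $\hat{\bb}_r^*$, combined with the uniform asymptotic equivalences supplied by Proposition 5. Since $\rho \in (0,1)$, for all large $n$ we have $n - d > 5$, so Proposition 3 applies and, because $1/(||\bb||^2 + 1) \leq 1$, gives
\[
\sup_{\bb \in \R^d}\left|R(\hat{\bb}_r^*,\bb) - R(\check{\bb}_r^*,\bb)\right| = O(n^{-1/2}).
\]
Moreover, as $\rho \neq 1$, Proposition 5 gives $\sup_{\bb}|R^*(\bb) - R(\hat{\bb}_r^*,\bb)| \to 0$ for each of the listed choices of $R^*$ (each is uniformly close to $r_{>0}(d/n,||\bb||)$, hence to the oracle risk, and $r_{>0}(d/n,||\bb||)$ is trivially close to itself), together with the uniform quotient equivalence $R^*(\bb) \sim R(\hat{\bb}_r^*,\bb)$.

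For the first (difference) assertion I would simply combine these bounds through the triangle inequality,
\[
\sup_{\bb}\left|R^*(\bb) - R(\check{\bb}_r^*,\bb)\right| \leq \sup_{\bb}\left|R^*(\bb) - R(\hat{\bb}_r^*,\bb)\right| + \sup_{\bb}\left|R(\hat{\bb}_r^*,\bb) - R(\check{\bb}_r^*,\bb)\right|,
\]
both terms on the right tending to $0$ as $d/n \to \rho$.

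For the second (quotient) assertion I would decompose
\[
\frac{R(\check{\bb}_r^*,\bb)}{R^*(\bb)} = \frac{R(\hat{\bb}_r^*,\bb)}{R^*(\bb)} + \frac{R(\check{\bb}_r^*,\bb) - R(\hat{\bb}_r^*,\bb)}{R^*(\bb)}.
\]
The first term tends to $1$ uniformly in $\bb$ by the equivalence $R^*(\bb) \sim R(\hat{\bb}_r^*,\bb)$. For the second term I would bound the numerator by Proposition 3 and bound the denominator from below: the left inequality in (\ref{jensen}), together with $R^*(\bb) \sim R(\hat{\bb}_r^*,\bb)$, gives (for large $n$) $R^*(\bb) \geq \frac{1}{2}r_0(d/n,||\bb||) = \frac{1}{2}\cdot\frac{(d/n)||\bb||^2}{||\bb||^2 + d/n}$. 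Writing $t = ||\bb||^2$, the second term is then bounded in absolute value by a fixed constant times
\[
\frac{(t + d/n)\,n^{-1/2}}{(t+1)\,t\,(d/n)},
\]
which I would estimate in two regimes: for $t \geq 1$ it is $O(n^{-1/2})$, while for $a_n \leq t \leq 1$ it is $O(n^{-1/2}/t) = O(1/(a_n n^{1/2}))$. Both vanish uniformly, the latter precisely because of the hypothesis $a_n n^{1/2} \to \infty$.

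The main obstacle is this weak-signal regime $a_n \leq t \leq 1$: the quotient is most delicate exactly where the signal is small, since there $R^*(\bb)$ behaves like $||\bb||^2$ and can be as small as a multiple of $a_n \to 0$, whereas the adaptive-versus-oracle discrepancy from Proposition 3 is only $O(n^{-1/2})$ and does not shrink as $||\bb|| \to 0$. The restriction $||\bb||^2 \geq a_n$ with $a_n n^{1/2} \to \infty$ is exactly the condition that renders the error negligible relative to the minimax risk there; the only point needing real care is confirming that the estimate is genuinely uniform over this region rather than merely pointwise.
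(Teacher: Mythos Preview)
Your proposal is correct and follows exactly the route the paper indicates: it states that Proposition 8 ``follows immediately from Propositions 3 and 5,'' and your argument spells out precisely how, using the triangle inequality for the difference claim and the decomposition into an oracle ratio plus an adaptive--oracle error term (controlled via the lower bound $R^*(\bb)\gtrsim r_0(d/n,\|\bb\|)$ from (\ref{jensen})) for the quotient claim. Your handling of the weak-signal regime $a_n\le \|\bb\|^2\le 1$ is exactly the content of the paper's remark that the restriction $\|\bb\|^2\ge a_n$ with $a_n n^{1/2}\to\infty$ is what makes the $O(n^{-1/2})$ error from Proposition 3 negligible relative to $R^*(\bb)\asymp \|\bb\|^2$.
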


Proposition 8 follows immediately from Propositions 3 and 5.  The
restriction $||\bb||^2 \gg n^{1/2}$ in the second part of
Proposition 8 is related to the fact
that for $d/n \to \rho \in (0,\infty)$, $R(\hat{\bb}_r^*,\bb) =
O(||\bb||^2)$ and the error bound in Proposition 3 is $O(n^{-1/2})$.
As discussed in Section 2.5.2, more detailed results on adaptive ridge
estimators are likely possible (that may apply, for instance, in settings where $d/n \to 0$ or $d/n
\to \rho \geq 1$), but this not pursued further here.

\section{An equivalent sequence model}

The rest of the paper is devoted to proving Theorem 1.  In this
section and Section 5, we assume that $d \leq n$.  In Section 6, we
address the case where $d > n$.  The major goal in this section
is to relate the linear model (\ref{lm}) to
an equivalent non-Gaussian sequence model.  

\subsection{The model}

Let $\S$ be
a random orthogonally invariant $m \times m$ positive definite matrix
with rank $m$, almost surely (by orthogonally invariant, we mean that
$\S$ and $U\S U^T$ have the same distribution for any $U \in O(m)$).
Additionally, let $\dd_0 \sim N(0,I_m)$ be a $d$-dimensional Gaussian
random vector that is independent of $\S$.  Recall that in
the sequence model (\ref{seq}), the vector $\z = (z_j)_{j \in J} = \bth
+ \dd$ is observed and $J$ is an index set.  In the formulation
considered here, $J = \{1,...,m\}$, $\dd = \S^{1/2}\dd_0$, and $\S$ is
observed along with $\z$.   
Thus, the available data are $(\z,\S)$ and
\begin{equation}\label{seq0}
\z = \bth + \dd = \bth + \S^{1/2}\dd_0 \in \R^m.
\end{equation}
Notice that $\dd$ is in general non-Gaussian.  However, conditional on
$\S$, $\dd$ is a Gaussian random vector with covariance $\S$.  We are interested in the risk for
estimating $\bth$ under squared error loss.  For an estimator
$\hat{\bth} = \hat{\bth}(\z,\S)$, this is defined by
\[
\tilde{R}(\hat{\bth},\bth) = E_{\bth}||\hat{\bth}(\z,\S) - \bth||^2 = E_{\bth}||\hat{\bth} - \bth||^2,
\]
where the expectation is taken with respect to $\dd_0$ and $\S$ (we
use ``$\sim$,'' as in $\tilde{R}$, to denote quantities related to the
sequence model, as opposed to the linear model).  

\subsection{Equivariance and optimality concepts}
Most of the key concepts initially introduced in the context of the linear model have 
analogues in the sequence model (\ref{seq0}).  In this subsection, we
describe some that will be used in our proof of Theorem 1.  

\vspace{.1in} 
{\em Definition 2.} Let $\hat{\bth} = \hat{\bth}(\z,\S)$ be an
estimator for $\bth$.  Then $\hat{\bth}$ is an {\em
  orthogonally equivariant} estimator for $\bth$ if
\[
U\hat{\bth}(\z,\S) = \hat{\bth}(U\z,U^T\S U) 
\] 
for all $U \in O(d)$.\hfill $\Box$

\vspace{.1in}

Let
\[
\tilde{\E} = \tilde{\E}_d = \{\hat{\bth}; \ \hat{\bth} \mbox{ is an orthogonally
equivariant estimator for } \bth\}
\]
denote the class of orthogonally equivariant estimators for $\bth$.
Also define the posterior mean for $\bth$ under the assumption that $\bth
\sim \pi_c$,
\[
\hat{\bth}_{unif}(c) = E_{\pi_c}(\bth|\z,\S)
\]
and the posterior mean under the assumption that $\bth \sim
N(0,c^2/mI)$.
\[
\hat{\bth}_r(c) = E_{N(0,c^2/mI)}(\bth|\z,\S) =  c^2/d\left\{\S +
  c^2/mI\right\}^{-1}\z
\]
(for both of these Bayes estimators we assume that $\bth$ is independent of
$\dd_0$ and $\S$).
The estimators $\hat{\bth}_{unif}(c)$ and $\hat{\bth}_r(c)$ for $\bth$ are
analogous to the estimators $\hat{\bb}_{unif}(c)$ and
$\hat{\bb}_r(c)$ for $\bb$, respectively.  Moreover, they are both orthogonally
equivariant, i.e. $\hat{\bth}_{unif}(c),\hat{\bth}_r(c) \in
\tilde{\E}$, and $\hat{\bth}_r(c)$ is a linear estimator.  
Now define the minimal equivariant risk for
the sequence model
\[
\tilde{R}^{(e)}(\bth) = \tilde{R}_m^{(e)}(\bth) = \inf_{\hat{\bth} \in \mathscr{E}_{seq}} \tilde{R}(\hat{\bth},\bth)
\]
and the minimax risk over the $\ell^2$-sphere of radius $c$,
\[
\tilde{R}^{(s)}(c) = \tilde{R}_m^{(s)}(c) = \inf_{\hat{\bth}}\sup_{\bth \in
  S(c)} \tilde{R}(\hat{\bth},\bth),
\]
where the infimum above is taken over all measurable estimator
$\hat{\bth} = \hat{\bth}(\z,\S)$.  The Hunt-Stein theorem yields the
following result, which is entirely analogous to Proposition 1.

\begin{prop} Suppose that $||\bth|| = c$.  Then
\[
\tilde{R}^{(s)}(c) = \tilde{R}^{(e)}(\bth) = \tilde{R}\{\hat{\bth}_{unif}(c),\bth\}.
\]
Furthermore, if $\hat{\bth} \in \tilde{\E}$, then
$\tilde{R}(\hat{\bth},\bth)$ depends on $\bth$ only through $c$.  
\end{prop}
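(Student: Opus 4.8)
The plan is to mirror the (omitted) proof of Proposition 1, realizing Proposition 9 as an instance of the Hunt--Stein theorem for the compact group $O(d)$ acting on the sequence model (\ref{seq0}). First I would record the underlying invariance structure. For $U \in O(d)$ let $U$ act on the data by $(\z,\S) \mapsto (U\z, U\S U^T)$ and on the parameter by $\bth \mapsto U\bth$. The key distributional fact is that if $(\z,\S) \sim P_{\bth}$, then $(U\z, U\S U^T) \sim P_{U\bth}$: writing $\z = \bth + \S^{1/2}\dd_0$, we have $U\z = U\bth + (U\S U^T)^{1/2}(U\dd_0)$, and orthogonal invariance of $\S$ together with $U\dd_0 \sim N(0,I)$ independent of $U\S U^T$ shows that $(U\S U^T, U\dd_0)$ is a fresh copy of $(\S,\dd_0)$, so the transformed data are distributed exactly as data generated from $U\bth$. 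The squared-error loss is plainly invariant, $||U\hat{\bth} - U\bth||^2 = ||\hat{\bth} - \bth||^2$, and the orthogonally equivariant estimators of Definition 2 are precisely the estimators commuting with this action. Since $O(d)$ acts transitively on each sphere $S(c)$, this invariance is exactly what is needed.

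The ``furthermore'' clause comes first and almost for free. If $\hat{\bth} \in \tilde{\E}$ and $U \in O(d)$, then combining equivariance of $\hat{\bth}$ with the distributional identity and loss invariance gives $\tilde{R}(\hat{\bth}, U\bth) = \tilde{R}(\hat{\bth}, \bth)$; transitivity of the action on $S(c)$ then forces $\tilde{R}(\hat{\bth},\bth)$ to depend on $\bth$ only through $c = ||\bth||$.

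For the three-way equality I would run the standard least-favorable-prior argument with the uniform (i.e.\ $O(d)$-invariant) measure $\pi_c$ on $S(c)$. As in the computation preceding Proposition 1, one checks that $\hat{\bth}_{unif}(c) = E_{\pi_c}(\bth|\z,\S)$ is itself orthogonally equivariant, using invariance of $\pi_c$ and of the model; hence by the previous paragraph its risk is constant over $S(c)$. Because $\hat{\bth}_{unif}(c)$ is the $\pi_c$-Bayes rule, for any estimator $\hat{\bth}$ we have $\sup_{\bth \in S(c)} \tilde{R}(\hat{\bth},\bth) \geq \int \tilde{R}(\hat{\bth},\bth)\,d\pi_c(\bth) \geq \int \tilde{R}(\hat{\bth}_{unif}(c),\bth)\,d\pi_c(\bth) = \tilde{R}\{\hat{\bth}_{unif}(c),\bth\}$, the last equality by constancy of the risk; taking the infimum over $\hat{\bth}$ yields $\tilde{R}^{(s)}(c) \geq \tilde{R}\{\hat{\bth}_{unif}(c),\bth\}$, while the reverse inequality is immediate since the risk is constant on $S(c)$. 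Finally, $\hat{\bth}_{unif}(c) \in \tilde{\E}$ gives $\tilde{R}^{(e)}(\bth) \leq \tilde{R}\{\hat{\bth}_{unif}(c),\bth\} = \tilde{R}^{(s)}(c)$, and the constancy-on-orbit property of equivariant rules gives the matching lower bound $\tilde{R}^{(e)}(\bth) = \inf_{\hat{\bth} \in \tilde{\E}} \sup_{\bth' \in S(c)} \tilde{R}(\hat{\bth},\bth') \geq \tilde{R}^{(s)}(c)$.

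I expect the only real obstacle to be bookkeeping rather than depth: one must pin down the precise form of the $O(d)$-action on the pair $(\z,\S)$ so that the model is genuinely invariant, and then verify the distributional identity from the orthogonal invariance of $\S$ and the rotational invariance of the Gaussian $\dd_0$. The Hunt--Stein machinery itself is elementary here because $O(d)$ is compact: the least-favorable-prior argument above (equivalently, Haar-averaging an arbitrary estimator and invoking convexity of the squared-error loss through Jensen's inequality) is what legitimizes the reduction to equivariant estimators, and no amenability subtleties arise.
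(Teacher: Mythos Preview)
Your proposal is correct and is exactly the Hunt--Stein argument the paper invokes (without proof) for both Proposition~1 and Proposition~9; the paper gives no details beyond citing the Hunt--Stein theorem, so your fleshed-out least-favorable-prior argument with the $O(d)$-invariant prior $\pi_c$ is precisely the intended route. One minor bookkeeping point: the action you write, $(\z,\S)\mapsto(U\z,U\S U^T)$, is the correct one for model invariance (and is what makes $\hat{\bth}_r(c)$ and $\hat{\bth}_{unif}(c)$ equivariant), so the $U^T\S U$ appearing in Definition~2 should be read as $U\S U^T$.
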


\subsection{Equivalence of the sequence model and the linear model}

The next proposition helps characterize the equivalence between the linear
model (\ref{lm}) and the sequence model (\ref{seq0}). 

\begin{prop} Suppose that $d \leq n$, $m = d$, and $\S = (X^TX)^{-1}$.
\begin{itemize}
\item[(a)] If $\bb = \bth$ and $\z = (X^TX)^{-1}X^T\y$, then
  $\hat{\bb}_{unif}(c) = \hat{\bth}_{unif}(c)$,  $\hat{\bb}_r(c) =
  \hat{\bth}_r(c)$, and
\[
R\{\hat{\bb}_r(c),\bb\} = \tilde{R}\{\hat{\bth}_r(c),\bth\}.
\]
\item[(b)] If $||\bth|| = ||\bb|| = c$, then
\[
\begin{array}{l}
R\{\hat{\bb}_{unif}(c),\bb\} = R^{(e)}(\bb) = R^{(s)}(c) \\ \qquad
\qquad \qquad =
\tilde{R}^{(s)}(c) = \tilde{R}^{(e)}(\bth) =
\tilde{R}\{\hat{\bth}_{unif}(c),\bth\}.
\end{array}
\]
\end{itemize}
\end{prop}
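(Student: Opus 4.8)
The plan is to realize the ordinary least squares vector $\z = (X^TX)^{-1}X^T\y$ as a sufficient statistic whose sampling law is exactly an instance of the sequence model (\ref{seq0}), and then to transport every optimality statement between the two models through this single identification.

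First I would pin down the distribution of $\z$. Since $\y = X\bb + \ee$ and $d \leq n$ (so $X^TX$ is invertible almost surely),
\[
\z = \bb + (X^TX)^{-1}X^T\ee = \bb + \S^{1/2}\dd_0, \qquad \S = (X^TX)^{-1}, \quad \dd_0 = (X^TX)^{-1/2}X^T\ee .
\]
Conditional on $X$ we have $\dd_0 \sim N(0,I_d)$, because its conditional covariance is $(X^TX)^{-1/2}X^TX(X^TX)^{-1/2} = I_d$; as this conditional law does not depend on $X$, the vector $\dd_0$ is in fact \emph{independent} of $X$, hence of $\S$. Moreover $\S = (X^TX)^{-1}$ is orthogonally invariant, since for $U \in O(d)$ the rotated design $XU$ has the same law as $X$, so $U^T(X^TX)U \stackrel{d}{=} X^TX$. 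Taking $m = d$ and $\bth = \bb$, the pair $(\z,\S)$ therefore has precisely the joint law prescribed by (\ref{seq0}). This observation is the engine behind both parts.

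For part (a) I would invoke sufficiency. For fixed $X$ of full column rank, $\z$ is sufficient for $\bb$ in the Gaussian linear model, so $E_{\pi_c}(\bb|\y,X) = E_{\pi_c}(\bb|\z,X)$; and since $\z \mid (\bb,X) \sim N(\bb,\S)$ has a density depending on $X$ only through $\S$, the posterior of $\bb$ given $(\z,X)$ is $(\z,\S)$-measurable. A one-line application of the tower property then yields $E_{\pi_c}(\bb|\z,X) = E_{\pi_c}(\bb|\z,\S) = \hat{\bth}_{unif}(c)$, giving $\hat{\bb}_{unif}(c) = \hat{\bth}_{unif}(c)$. The identity $\hat{\bb}_r(c) = \hat{\bth}_r(c)$ follows identically (both are posterior means under the $N(0,c^2/dI)$ prior) or from the one-line manipulation $\frac{c^2}{d}(\S + \frac{c^2}{d}I)^{-1}\S = (X^TX + \frac{d}{c^2}I)^{-1}$ together with $\z = \S X^T\y$. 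The risk equality $R\{\hat{\bb}_r(c),\bb\} = \tilde{R}\{\hat{\bth}_r(c),\bth\}$ is then automatic: the estimator is the same function of $(\z,\S)$, the loss depends only on $(\z,\S)$ and the parameter, and the two models induce the same law on $(\z,\S)$, so the expected losses coincide.

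For part (b), the two outer chains are exactly Proposition 1 (giving $R\{\hat{\bb}_{unif}(c),\bb\} = R^{(e)}(\bb) = R^{(s)}(c)$) and Proposition 9 (giving $\tilde{R}^{(s)}(c) = \tilde{R}^{(e)}(\bth) = \tilde{R}\{\hat{\bth}_{unif}(c),\bth\}$). Only the middle link $R^{(s)}(c) = \tilde{R}^{(s)}(c)$ is new, and it follows from part (a) by the same risk-transport argument: $\hat{\bb}_{unif}(c) = \hat{\bth}_{unif}(c)$ is a function of $(\z,\S)$, so its risk is computed against the common law of $(\z,\S)$, whence $R\{\hat{\bb}_{unif}(c),\bb\} = \tilde{R}\{\hat{\bth}_{unif}(c),\bth\}$; chaining the three pieces proves part (b). The main obstacle is the measure-theoretic bookkeeping rather than any hard computation: one must check genuine independence of $\dd_0$ and $\S$ (not merely conditional Gaussianity), which is what makes the reduction exact, and then justify passing from the posterior given the full $(\z,X)$ to the posterior given the coarser $(\z,\S)$. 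Once the law of $(\z,\S)$ is identified as an instance of (\ref{seq0}) and sufficiency is in hand, every remaining assertion is a transcription between the two models.
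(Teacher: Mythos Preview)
Your proposal is correct and follows essentially the same route as the paper, which disposes of part (a) as ``obvious'' and attributes part (b) to sufficiency of $((X^TX)^{-1}X^T\y,(X^TX)^{-1})$ together with the Rao-Blackwell inequality. Your write-up simply unpacks these two sentences: the law identification and sufficiency argument you give is exactly what makes part (a) ``obvious,'' and your risk-transport link $R\{\hat{\bb}_{unif}(c),\bb\} = \tilde{R}\{\hat{\bth}_{unif}(c),\bth\}$ combined with Propositions 1 and 9 is equivalent to the paper's Rao-Blackwell step, since both hinge on the fact that restricting to $(\z,\S)$-measurable estimators costs nothing.
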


Part (a) of Proposition 10 is obvious; part (b) follows from the fact
that $((X^TX)^{-1}X^T\y$, $(X^TX)^{-1})$ is sufficient for $\bb$ and the
Rao-Blackwell inequality. Proposition 10 implies that it
suffices to consider the sequence model in order to prove Theorem 1.

\section{Proof of Theorem 1 (a): Normal approximation for the uniform prior}

It follows from Proposition 9 that the Bayes estimator $\hat{\bth}_{unif}(c)$ is
optimal among all orthogonally equivariant estimators for $\bth$, if
$||\bth|| = c$.  In
this section, we prove Theorem 1 (a) by bounding
\begin{equation}\label{boundthis}
\left|R\{\hat{\bth}_r(c),\bth\} - R\{\hat{\bth}_{unif}(c),\bth\}\right|
\end{equation}
and applying Proposition 10.  

\cite{marchand1993estimation} studied the relationship between
$\hat{\bth}_{unif}(c)$ and $\hat{\bth}_r(c)$ under the
assumption that $||\bth|| = c$ and $\S = \tau^2I$ (i.e. in the
Gaussian sequence model with iid errors).  Marchand
proved the following result, which is one of the keys to the proof of
Theorem 1 (a).

\begin{prop}[Theorem 3.1 from
\citep{marchand1993estimation}]
Suppose that $\S = \tau^2I$ and $||\th|| = c$.  Then 
\begin{eqnarray*}
\left|\tilde{R}\{\hat{\bth}_r(c),\bth\}-\tilde{R}\{\hat{\bth}_{unif}(c),\bth\}\right|
& \leq&\frac{1}{m}\frac{c^2\tau^2m}{c^2 + \tau^2m}
\\
& = & \frac{1}{m}\tilde{R}\{\hat{\bth}_r(c),\bth\}.
\end{eqnarray*}  
\end{prop}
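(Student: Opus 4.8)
The plan is to exploit the fact that $\hat{\bth}_{unif}(c)$ is the posterior mean, hence the $L^2$-Bayes estimator, under $\pi_c$. Taking all expectations over the joint law of $(\bth,\z,\S)$ with $\bth\sim\pi_c$, and using that by Proposition 9 both $\tilde{R}\{\hat{\bth}_r(c),\bth\}$ and $\tilde{R}\{\hat{\bth}_{unif}(c),\bth\}$ are constant over $S(c)$ and therefore coincide with their Bayes risks, the Pythagorean identity for conditional expectation gives
\[
\tilde{R}\{\hat{\bth}_r(c),\bth\} - \tilde{R}\{\hat{\bth}_{unif}(c),\bth\} = E\|\hat{\bth}_r(c) - \hat{\bth}_{unif}(c)\|^2 \ge 0.
\]
Nonnegativity disposes of the absolute value (consistent with the optimality of $\hat{\bth}_{unif}(c)$ from Proposition 9) and reduces the claim to an upper bound on the right-hand side. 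I would then simplify it by linear-Bayes algebra: with $\S=\tau^2I$ one has the linear map $\hat{\bth}_r(c)=a\z$, where $a=c^2/(c^2+\tau^2m)$, so the tower property collapses the cross term $E[\hat{\bth}_r(c)^T\hat{\bth}_{unif}(c)]=a\,E(\z^T\bth)=ac^2$, while $E\|\hat{\bth}_r(c)\|^2=a^2(c^2+\tau^2m)=ac^2$. Hence $E\|\hat{\bth}_r(c)-\hat{\bth}_{unif}(c)\|^2=E\|\hat{\bth}_{unif}(c)\|^2-ac^2$, and since $\|\bth\|=c$ forces $E\|\hat{\bth}_{unif}(c)\|^2=c^2-\tilde{R}\{\hat{\bth}_{unif}(c),\bth\}$, the target is equivalent to the clean Bayes-risk lower bound
\[
\tilde{R}\{\hat{\bth}_{unif}(c),\bth\}\ \ge\ \frac{m-1}{m}\,\tilde{R}\{\hat{\bth}_r(c),\bth\}=\frac{(m-1)c^2\tau^2}{c^2+\tau^2m}.
\]

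For the analytic core I would use the orthogonal equivariance from Section 4.2: the posterior of $\bth/c$ given $\z$ is von Mises--Fisher with concentration $\kappa=c\|\z\|/\tau^2$, so $\|\hat{\bth}_{unif}(c)\|=cA_m(\kappa)$ with the Bessel ratio $A_m=I_{m/2}/I_{m/2-1}$, and $\tilde{R}\{\hat{\bth}_{unif}(c),\bth\}=c^2E[1-A_m(\kappa)^2]$. The Riccati identity $A_m'(\kappa)=1-A_m(\kappa)^2-\tfrac{m-1}{\kappa}A_m(\kappa)\ge0$ decomposes the von Mises--Fisher variance into one radial component $A_m'$ and $m-1$ tangential components $A_m/\kappa$; dropping the nonnegative radial part reduces the claim to the one-dimensional estimate $E[A_m(\kappa)/\kappa]\ge\tau^2/(c^2+\tau^2m)$. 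The natural inputs for controlling the required moments of $\kappa$ are the Stein/Tweedie identities $E[\kappa A_m(\kappa)]=c^2/\tau^2$ and $\nabla\cdot\hat{\bth}_{unif}(c)=(c^2/\tau^2)\{1-A_m(\kappa)^2\}$, the latter obtained by substituting the Riccati relation into the divergence of the radial map $h(\|\z\|)\z$.

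The hard part is exactly this last estimate: sharply lower-bounding the averaged per-coordinate tangential posterior variance $E[A_m(\kappa)/\kappa]$, which is where monotonicity and Amos-type bounds on $A_m$ must enter and which is what produces the factor $1/m$. An alternative that avoids the explicit Bessel ratio is Brown's identity: it rewrites the risk gap as $\tau^4\{I(p_u)-I(p_r)\}$, the excess Fisher information of the smoothed-sphere marginal $p_u=\pi_c*N(0,\tau^2I)$ over the Gaussian marginal $p_r=N(0,(c^2/m+\tau^2)I)$, turning the target into a sharp Fisher-information comparison for which Stam's inequality is the natural tool. Either route concentrates all of the difficulty in the same quantitative comparison between the sphere prior and its Gaussian surrogate, so I expect that comparison — rather than any of the preceding reductions — to be the genuine obstacle.
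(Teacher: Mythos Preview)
The paper does not prove this proposition: it is quoted verbatim as Theorem~3.1 of \cite{marchand1993estimation} and used as a black box. The paper remarks that ``Marchand's result relies on somewhat delicate calculations involving modified Bessel functions'' and then invokes it, via Brown's identity, as the bound $\zeta\le (\gamma s_m)^{-1}(\gamma s_m+c^2/m^2)/(\gamma s_m+c^2/m)$ inside the proof of Proposition~14. So there is no ``paper's own proof'' to compare against; your task is to reproduce Marchand's argument.

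Your reductions are correct and essentially follow Marchand: the Pythagorean step, the rewriting $E\|\hat{\bth}_r-\hat{\bth}_{unif}\|^2=\tilde R\{\hat{\bth}_r(c),\bth\}-\tilde R\{\hat{\bth}_{unif}(c),\bth\}$, and the von Mises--Fisher representation $\|\hat{\bth}_{unif}(c)\|=cA_m(\kappa)$ with the Riccati decomposition $1-A_m^2=A_m'+(m-1)A_m/\kappa$ are exactly the ingredients Marchand uses. But your proposal stops precisely where the content lies: you reduce to $E[A_m(\kappa)/\kappa]\ge \tau^2/(c^2+\tau^2 m)$ and then defer it to ``Amos-type bounds,'' without saying which bound and why it is sharp enough. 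That inequality is the whole proposition; the Stein identity $E[\kappa A_m(\kappa)]=c^2/\tau^2$ that you note controls the wrong moment, and generic Amos bounds on $A_m$ do not by themselves yield the constant $1/m$. Marchand's argument closes this gap by a careful monotonicity/convexity analysis of the Bessel ratio, and you would need to reproduce that step rather than gesture at it.

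On your alternative: Brown's identity plus Stam's inequality is exactly the device the paper uses to \emph{extend} Marchand's bound from $\S=\tau^2 I$ to general orthogonally invariant $\S$, feeding Marchand's result in as the iid-case input. Using Brown--Stam to prove the iid case itself would require a sharp Fisher-information upper bound for the spherical convolution $\pi_c*N(0,\tau^2 I)$, and Stam alone (comparing to a Gaussian) gives only the trivial bound $I\le \tau^{-2}I_m$, which recovers $\tilde R\{\hat{\bth}_{unif}(c),\bth\}\ge 0$, not the factor $(m-1)/m$. So that route does not avoid the Bessel analysis.
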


Thus, in the Gaussian sequence model with iid errors, the risk of $\hat{\bth}_r(c)$ is
nearly as small as that of $\hat{\bth}_{unif}(c)$.
Marchand's result relies on somewhat delicate calculations involving
 modified Bessel functions \citep{robert1990modified}.  A direct
 approach to bounding (\ref{boundthis}) for general $\S$
might involve attempting to mimic these calculations.
However, this seems daunting \citep{bickel1981minimax}.
Brown's identity, which relates the risk of a Bayes
estimator to the Fisher, allows us to sidestep these calculations and
apply Marchand's result directly.

Define the Fisher information of a random vector $\bxi \in \R^m$, with
density $f_{\bxi}$ (with respect to Lebesgue measure on $\R^m$) by
\[
I(\bxi) = \int_{\R^d} \frac{\nabla f_{\bxi}(\t)\nabla
    f_{\bxi}(\t)^T}{f_{\bxi}(\t)} \ d\t,
\]
where $\nabla f_{\bxi}(\t)$ is the gradient of $f_{\bxi}(\t)$.
Brown's
identity has typically been used for univariate problems or
problems in the sequence model with iid Gaussian errors
\citep{bickel1981minimax, dasgupta2010false, brown1990information,
  brown1991information}.  The next proposition is a straightforward
generalization to the correlated multivariate Gaussian setting. Its proof is based on Stein's lemma.  

\begin{prop}[Brown's Identity] Suppose that
$\mathrm{rank}(\S) = m$, with probability 1.  Let $I_{\S}(\bth + \S^{1/2}\dd_0)$ denote
the Fisher information of $\bth + \S^{1/2}\dd_0$, conditional on $\S$, under
the assumption that $\bth \sim \pi_c$ is independent of $\dd_0$ and
$\S$.  If $||\bth|| = c$, then
\[
\tilde{R}\{\hat{\bth}_{unif}(c),\bth\} = E\tr(\S) - E\tr\left\{\S^2 I_{\S}(\bth + \S^{1/2}\dd)\right\}.
\]
\end{prop}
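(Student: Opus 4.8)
The plan is to reduce to a fixed-covariance problem by conditioning on $\S$ and then assemble the three classical ingredients behind Brown's identity: Tweedie's formula for the posterior mean, Stein's lemma for a cross term, and the second Bartlett identity relating the expected Hessian of the log-marginal to the negative Fisher information. I work conditionally on $\S$; since $\bth \sim \pi_c$ is independent of both $\dd_0$ and $\S$, conditional on $\S$ the model $\z = \bth + \S^{1/2}\dd_0$ is an ordinary correlated Gaussian location problem with known covariance $\S$ and prior $\pi_c$. Once the identity is established in this conditional form it extends to the claimed statement by taking the expectation over $\S$.

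First I would write $m_\S(\z) = \int \phi_\S(\z - \bth)\,\pi_c(d\bth)$ for the conditional marginal density of $\z$, where $\phi_\S$ is the $N(0,\S)$ density. Because $\pi_c$ is supported on the compact sphere $S(c)$ and $\phi_\S$ is smooth with Gaussian tails, $m_\S$ is smooth, strictly positive, and all integrals below converge and may be differentiated under the integral sign. Differentiating $\phi_\S(\z - \bth)$ in $\z$ and rearranging yields Tweedie's formula
\[
\hat{\bth}_{unif}(c) = E_{\pi_c}(\bth\mid\z,\S) = \z + \S\,\nabla\log m_\S(\z),
\]
so that, writing $\g = \nabla\log m_\S$ and $\z - \bth = \S^{1/2}\dd_0$, the estimation error splits as $\hat{\bth}_{unif}(c) - \bth = \S\g(\z) + \S^{1/2}\dd_0$.

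Then I would expand the conditional risk into three pieces,
\[
E\{\|\hat{\bth}_{unif}(c)-\bth\|^2 \mid \S\} = E\{\|\S\g(\z)\|^2\mid\S\} + 2\,E\{\g(\z)^T\S(\z-\bth)\mid\S\} + E\{\|\z-\bth\|^2\mid\S\}.
\]
The last term is $\tr(\S)$ since $\z-\bth \sim N(0,\S)$ given $\S$. The first term equals $\tr\{\S^2 I_\S(\z)\}$ directly from the definition $I_\S(\z) = E\{\g(\z)\g(\z)^T\mid\S\}$. The cross term is the crux. Here I would condition further on $\bth$ and apply the multivariate Stein's lemma to the standard Gaussian $\dd_0$: with $\S(\z-\bth) = \S^{3/2}\dd_0$, Gaussian integration by parts converts $E\{\g(\bth+\S^{1/2}\dd_0)^T\S^{3/2}\dd_0\}$ into an expectation of the Jacobian of $\g$, the chain rule producing a factor $\S^{1/2}$; the product $\S^{3/2}\S^{1/2} = \S^2$ together with symmetry of the Hessian $\nabla^2\log m_\S$ collapses the bookkeeping to $E\{\tr[\S^2\nabla^2\log m_\S(\z)]\mid\S\}$. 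Invoking the second Bartlett identity $E\{\nabla^2\log m_\S(\z)\mid\S\} = -I_\S(\z)$ (equivalently $\int\nabla^2 m_\S = 0$) turns the cross term into $-2\tr\{\S^2 I_\S(\z)\}$. Summing the three pieces gives $\tr(\S) - \tr\{\S^2 I_\S(\z)\}$, and taking the expectation over $\S$ yields the claim.

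I expect the main obstacle to be the cross-term computation: keeping the matrix algebra straight through the multivariate Stein's lemma (the emergence of $\S^2$ from $\S^{3/2}$ and the chain-rule factor $\S^{1/2}$, exploiting symmetry of the Hessian) and rigorously justifying the integration by parts, i.e. checking that the relevant boundary terms vanish and that $\g(\z)$ grows slowly enough against the Gaussian tails. The compact support of $\pi_c$ makes $m_\S$ and its logarithmic derivatives well behaved, so these regularity verifications should go through, but they are where the care is needed; the Bartlett identity is a second such integration-by-parts argument and would be checked in the same framework.
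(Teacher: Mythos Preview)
Your proposal is correct and follows essentially the same route as the paper: Tweedie's formula for the posterior mean, a three-term expansion of the squared error, Stein's lemma (integration by parts) on the cross term to produce $E\tr\{\S^2\nabla^2\log m_\S(\z)\}$, and the Bartlett-type identity converting this to $-E\tr\{\S^2 I_\S(\z)\}$. The only differences are presentational---you make the conditioning on $\S$ and the regularity justifications more explicit---but the mathematical content matches.
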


\begin{proof} Suppose that $c = ||\bth||$ and let 
\[
f(\z) = \int_{S(c)}
(2\pi)^{-d/2}\det(\S^{-1/2})e^{-\frac{1}{2}(\z -
  \bth)^T\S^{-1}(\z - \bth)} \ d\pi_c(\bth)
\]
be the density of $\z = \bth + \S^{1/2}\dd_0$, conditional on $\S$ and under the assumption that
$\bth \sim \pi_c$.  Then
\[
\hat{\bth}_{unif}(c) = E_{\pi_c}(\bth|\z,\S) = \z - E_{\pi_c}(\S^{1/2}\dd_0|\z,\S) = \z +
\frac{\S\nabla f(\z)}{f(\z)}.
\]
It follows that
\begin{eqnarray}
E||\hat{\bth}_{unif}(c) - \bth||^2& = & 
E\left|\left|\S^{1/2}\dd + \frac{\S\nabla
          f(\z)}{f(\z)}\right|\right|^2 \nonumber \\
& = & E\tr(\S) + 2E\left\{\frac{\dd^T\S^{3/2}\nabla
    f(\z)}{f(\z)}\right\} \nonumber \\
&& \qquad + E\left\{\frac{\nabla m(\z)^T\S^2\nabla
    f(\z)}{f(\z)^2}\right\} \nonumber \\
& = & E\tr(\S) + 2E\left\{\frac{\dd^T\S^{3/2}\nabla
    f(\z)}{f(\z)}\right\} \label{prop6a} \\ && \qquad +E\tr\left\{\S^2 I_{\S}(\bth +
  \S^{1/2}\dd)\right\} \nonumber
\end{eqnarray}
By Stein's lemma (integration by parts),
\begin{eqnarray}\label{prop6b}
E\left\{\frac{\dd^T\S^{3/2}\nabla
    f(\z)}{f(\z)}\right\} & = & E\left[\tr\left\{\S^2 \nabla^2 \log f(\z)
  \right\}\right] \nonumber \\
& = & -E\tr\left\{\S^2 I_{\S}(\bth + \S^{1/2}\dd)\right\}. \label{prop6b}
\end{eqnarray}
Brown's identity follows by combining (\ref{prop6a}) and
(\ref{prop6b}).  
\end{proof}

Using Brown's identity, Fisher information bounds may be converted to risk
bounds, and vice-versa.  Its usefulness in the present context springs
from (i) the decomposition
\begin{equation}\label{decomp}
\z = \bth + \S^{1/2}\dd_0 = \left\{\bth + (\g s_m)^{1/2}\dd_1\right\} + (\S - \gamma s_m)^{1/2}\dd_2,
\end{equation}
where $\dd_1,\dd_2 \iidsim N(0,I_m)$ are independent of $\S$, $s_m$ is the smallest
eigenvalue of $\S$, and $0 < \g < 1$ is a constant and (ii) Stam's inequality for the Fisher information of sums of
independent random variables.

\begin{prop}[Stam's inequality; this version due to \cite{zamir1998proof}] Let $\vv,\w \in \R^m$ be independent random variables that are absolutely continuous
with respect to Lebesgue measure on $\R^m$.   For every $m \times m$ positive definite matrix $\S$, 
\[
\tr\left[\S^2I(\vv + \w)\right] \leq  \tr\left\{\S^2\left[I(\vv)^{-1} + I(\w)^{-1}\right]^{-1}\right\}.
\] 
\end{prop}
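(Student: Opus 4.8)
The plan is to prove the stronger matrix (Loewner-order) Fisher information inequality $I(\vv + \w) \preceq \left[I(\vv)^{-1} + I(\w)^{-1}\right]^{-1}$ and then recover the stated $\S^2$-weighted trace bound essentially for free. The reduction is immediate: for any positive semidefinite $M$ and the positive definite $\S$, cyclicity and symmetry give $\tr(\S^2 M) = \tr(\S M \S) \geq 0$, since $\S M \S$ is a congruence of $M$ and hence positive semidefinite. Applying this to $M = \left[I(\vv)^{-1}+I(\w)^{-1}\right]^{-1} - I(\vv+\w) \succeq 0$ yields the proposition. So the whole task is to establish the matrix inequality.

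First I would record the two basic facts about scores. Writing $\rho_\bxi(\t) = \nabla f_\bxi(\t)/f_\bxi(\t)$ for the score of a vector $\bxi$, under the usual regularity hypotheses each score is centered, $E\rho_\bxi(\bxi) = \int \nabla f_\bxi = 0$, and $I(\bxi) = E\left[\rho_\bxi(\bxi)\rho_\bxi(\bxi)^T\right]$. The crucial identity comes from differentiating the convolution $f_{\vv+\w}(\u) = \int f_\vv(\u - \mathbf{s})f_\w(\mathbf{s})\, d\mathbf{s}$ under the integral sign and rewriting the result as a conditional expectation:
\[
\rho_{\vv+\w}(\vv+\w) = E\left[\rho_\vv(\vv)\mid \vv + \w\right] = E\left[\rho_\w(\w)\mid \vv+\w\right].
\]

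Next I would exploit that conditional expectation is the $L^2$ projection. Fix any matrices $A,B$ with $A + B = \I$ and set $\u = \vv+\w$. By the two identities above, $\rho_\u(\u) = E\left[A\rho_\vv(\vv) + B\rho_\w(\w)\mid \u\right]$, so with $\mathbf{g} = A\rho_\vv(\vv)+B\rho_\w(\w)$ we have $I(\u) = E\left[E(\mathbf{g}\mid\u)E(\mathbf{g}\mid\u)^T\right] \preceq E\left[\mathbf{g}\mathbf{g}^T\right]$, the inequality being the Loewner-order variance-reduction property of conditioning (the difference equals the covariance of $\mathbf{g} - E(\mathbf{g}\mid\u)$). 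Since $\vv$ and $\w$ are independent and the scores are centered, the cross terms vanish and $E\left[\mathbf{g}\mathbf{g}^T\right] = A\,I(\vv)\,A^T + B\,I(\w)\,B^T$. Thus $I(\vv+\w)\preceq A\,I(\vv)\,A^T + B\,I(\w)\,B^T$ for every admissible $A,B$. Taking $K = \left[I(\vv)^{-1}+I(\w)^{-1}\right]^{-1}$ and the explicit choice $A = K\,I(\vv)^{-1}$, $B = K\,I(\w)^{-1}$ --- which satisfies $A+B = K\left(I(\vv)^{-1}+I(\w)^{-1}\right) = \I$ --- a short computation using symmetry of $K, I(\vv), I(\w)$ collapses the right-hand side to $K\left(I(\vv)^{-1}+I(\w)^{-1}\right)K = K$, giving $I(\vv+\w)\preceq K$ and hence the proposition.

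The main obstacle is analytic rather than algebraic: justifying the differentiation under the integral sign and the mean-zero property of the scores requires regularity (finite Fisher information plus enough smoothness and integrability of the densities to interchange $\nabla$ and $\int$). I would either impose these as standing hypotheses or treat the general absolutely continuous case by a smoothing argument (convolve with a vanishingly small Gaussian, prove the inequality there, and pass to the limit via lower semicontinuity of Fisher information); this is precisely what Zamir's data-processing formulation is designed to streamline. The algebraic optimization over $A,B$ is routine once the projection inequality $I(\vv+\w)\preceq A\,I(\vv)\,A^T + B\,I(\w)\,B^T$ is in hand.
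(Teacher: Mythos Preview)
The paper does not actually prove this proposition; it is stated with attribution to \cite{zamir1998proof} and then invoked as a black box in the proof of Proposition~14. So there is no ``paper's own proof'' to compare against directly.

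Your argument is correct and is the classical Blachman-style proof: the convolution identity for scores gives $\rho_{\vv+\w} = E[\rho_\vv(\vv)\mid \vv+\w] = E[\rho_\w(\w)\mid \vv+\w]$, the $L^2$-contraction property of conditional expectation yields $I(\vv+\w) \preceq A\,I(\vv)\,A^T + B\,I(\w)\,B^T$ for any $A+B=\I$, and the optimal choice of $A,B$ collapses the bound to the harmonic-type mean. The reduction from the Loewner inequality to the $\S^2$-weighted trace inequality via $\tr(\S^2 M) = \tr(\S M \S)\geq 0$ is clean and correct. Your caveat about regularity is appropriate and honestly stated.

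For comparison, Zamir's cited proof is organized differently: he frames the problem as a data-processing inequality for Fisher information, treating $\vv+\w$ as the output of a channel with input $(\vv,\w)$ and invoking a Cram\'er--Rao-type bound for the induced parametric family. The two proofs are close cousins --- both ultimately rest on the same projection/score identity --- but Zamir's packaging sidesteps some of the explicit optimization over $A,B$ and handles the regularity issues slightly more uniformly. Your approach is arguably more transparent about where the matrix inequality comes from, at the cost of needing the smoothing argument you mention for the general absolutely continuous case.
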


Notice that conditional on $\S$, the term $\bth + (\g s_m)^{1/2}\dd_1$ in (\ref{decomp}) may be viewed as
an observation from the Gaussian sequence model with iid errors.  The necessary bound on (\ref{boundthis}) is obtained by piecing together Brown's identity, the
decomposition (\ref{decomp}), and Stam's inequality, so that
Marchand's inequality (Proposition 11) may be applied to $\bth + (\g s_m)^{1/2}\dd_1$.

\begin{prop}  Suppose that $\S$ has rank $m$ with probability
1 and that $||\bth|| = c$.  Let $s_1 \geq \cdots \geq s_m \geq 0$ denote the eigenvalues of
$\S$. Then
\[
\left|\tilde{R}\{\hat{\bth}_r(c),\bth\} - R\{\hat{\bth}_{unif}(c),\bth\}\right| \leq\frac{1}{m}E\left\{\frac{s_1}{s_m}\tr\left(\S^{-1} + m/c^2I\right)^{-1}\right\}.
\]
\end{prop}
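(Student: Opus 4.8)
The plan is to drop the absolute value, rewrite both risks through Brown's identity in the common form $E\tr(\S)$ minus a weighted Fisher-information term, and then control the difference of the two Fisher informations. First, since $\hat{\bth}_r(c)\in\tilde{\E}$, Proposition 9 gives $\tilde{R}\{\hat{\bth}_r(c),\bth\}\geq\tilde{R}^{(e)}(\bth)=\tilde{R}\{\hat{\bth}_{unif}(c),\bth\}$, so the quantity to bound is the nonnegative difference $D:=\tilde{R}\{\hat{\bth}_r(c),\bth\}-\tilde{R}\{\hat{\bth}_{unif}(c),\bth\}$. Brown's identity (Proposition 12) gives $\tilde{R}\{\hat{\bth}_{unif}(c),\bth\}=E\tr(\S)-E\tr\{\S^2 I_{\S}(\z)\}$, where $I_{\S}(\z)$ is the conditional Fisher information of $\z=\bth+\S^{1/2}\dd_0$ under $\bth\sim\pi_c$. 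Since $\hat{\bth}_r(c)$ is linear, its risk depends on the prior only through the second moment $(c^2/m)I$, which $\pi_c$ shares with $N(0,(c^2/m)I)$; applying Brown's identity to this Gaussian prior, whose marginal for $\z$ is Gaussian with Fisher information $(\S+(c^2/m)I)^{-1}$, yields $\tilde{R}\{\hat{\bth}_r(c),\bth\}=E\tr(\S)-E\tr\{\S^2(\S+(c^2/m)I)^{-1}\}$. Subtracting, $D=E\tr\{\S^2[I_{\S}(\z)-(\S+(c^2/m)I)^{-1}]\}$.

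Next I would feed in the decomposition (\ref{decomp}): conditionally on $\S$, $\z=\vv+\w$ with $\vv=\bth+(\g s_m)^{1/2}\dd_1$ and $\w=(\S-\g s_m I)^{1/2}\dd_2$ independent, $\g\in(0,1)$. Stam's inequality (Proposition 13) gives $\tr\{\S^2 I_{\S}(\z)\}\leq\tr\{\S^2[I(\vv)^{-1}+I(\w)^{-1}]^{-1}\}$, where $I(\w)^{-1}=\S-\g s_m I$; and since $\pi_c$ and the iid noise are spherically symmetric, $I(\vv)=(J/m)I$ is a scalar multiple of the identity for a scalar $J$, so the bracket equals $(\S+\mu I)^{-1}$ with $\mu:=m/J-\g s_m$. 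The identical manipulation for $N(0,(c^2/m)I)$ is an equality returning $(\S+(c^2/m)I)^{-1}$, the common noise $\g s_m$ cancelling. Hence, conditionally, $\tr\{\S^2[I_{\S}(\z)-(\S+(c^2/m)I)^{-1}]\}\leq G\sum_i\frac{s_i^2}{(s_i+\mu)(s_i+c^2/m)}$ with $G:=c^2/m-\mu$. Viewing $\vv$ as an observation from the iid Gaussian sequence model with variance $\g s_m$, Marchand's inequality (Proposition 11) controls $J$: it yields $\mu\leq c^2/m$ (so $G\geq0$), and through $\tilde{R}_{unif}\geq(1-1/m)\tilde{R}_r$ in that iid model together with Brown's identity there, the lower bound $\mu\geq\frac{(m-1)c^2\g s_m}{c^2+m^2\g s_m}$.

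Finally I would bound the eigenvalue sum. Factoring $\frac{s_i^2}{(s_i+\mu)(s_i+c^2/m)}=\frac{s_i}{s_i+\mu}\cdot\frac{m}{c^2}\cdot\frac{(c^2/m)s_i}{s_i+c^2/m}$, using $\frac{s_i}{s_i+\mu}\leq\frac{s_1}{s_1+\mu}$, and recognizing $\sum_i\frac{(c^2/m)s_i}{s_i+c^2/m}=\tr(\S^{-1}+(m/c^2)I)^{-1}$, the desired bound reduces to the scalar inequality $G\leq\frac{c^2(s_1+\mu)}{m^2 s_m}$, i.e. $\mu\geq\frac{c^2(m s_m-s_1)}{c^2+m^2 s_m}$. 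The lower bound on $\mu$ from the previous step delivers this as $\g\uparrow1$, where the requirement becomes $(m-1)s_m\geq m s_m-s_1$, i.e. exactly $s_1\geq s_m$; taking $\g\uparrow1$ and then expectation over $\S$ gives the claim. I expect the main obstacle to be precisely this last step: no fixed $\g<1$ suffices --- already when $\S=sI$ the sphere-versus-Gaussian slack in Stam's inequality is recovered only as $\g\uparrow1$ --- so one must pass to the limit (where $\w$ degenerates and Stam must be applied along the way, justifying the interchange of limit and expectation), and the factor $s_1/s_m$ then emerges exactly from $s_1\geq s_m$.
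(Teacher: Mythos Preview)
Your proposal is correct and follows essentially the same route as the paper: Brown's identity to convert both risks into Fisher-information form, the decomposition $\z=\{\bth+(\g s_m)^{1/2}\dd_1\}+(\S-\g s_mI)^{1/2}\dd_2$, Stam's inequality, Marchand's bound in the resulting iid model, and the limit $\g\uparrow1$. The only cosmetic differences are that the paper obtains $\tilde{R}\{\hat{\bth}_r(c),\bth\}=E\tr(\S^{-1}+m/c^2I)^{-1}$ directly rather than via a second Brown's identity, and that it substitutes the Marchand lower bound $1/\zeta-\g s_m\geq(m-1)\g s_mc^2/(\g s_mm^2+c^2)$ into the Stam bound first and then lets $\g\uparrow1$ in the resulting explicit expression, so your anticipated limit-interchange worry does not actually arise---the left side is independent of $\g$ and the right side is a continuous function of $\g$, conditionally on $\S$.
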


\begin{proof} It is straightforward to check that 
\begin{equation}\label{normalRisk}
R\{\hat{\bth}_r(c),\bth\} = E\tr(\S^{-1} +
  m/c^2I)^{-1}.
\end{equation}
Thus, Brown's identity and (\ref{normalRisk}) imply 
\begin{eqnarray*}
\tilde{R}\{\hat{\bth}_r(c),\bth\} - \tilde{R}\{\hat{\bth}_{unif}(c),\bth\}  & = & E\tr\left\{\S^2I_{\S}(\bth +
  \dd)\right\} \\
&& \qquad + E\tr(\S^{-1} +
  m/c^2I)^{-1} - E\tr(\S) \\
& = & E\tr\left\{\S^2I_{\S}(\bth +
  \dd)\right\} \\
&& \qquad - E\tr\left\{\S^2(\S + c^2/mI)^{-1}\right\}.
\end{eqnarray*}
Taking $\vv = \bth + (\g s_m)^{1/2}\dd_1$ and $\w = (\S - \gamma
s_m)^{1/2}\dd_2$ in Stam's inequality, where $\dd_1$, $\dd_2$, and $0
< \g < 1$ are given in (\ref{decomp}), one obtains
\begin{eqnarray*}
\tilde{R}\{\hat{\bth}_r(c),\bth\} - \tilde{R}\{\hat{\bth}_{unif}(c),\bth\} & \leq&
 E\tr\left(\S^2\Big[I_{\S}\{\bth +  (\g s_m)^{1/2}\dd_1\}^{-1}
   \right. \\ && \qquad \left.+ \S - \g s_m I\Big]^{-1}\right) \\
&& \qquad - E\tr\left\{\S^2(\S + c^2/mI)^{-1}\right\}
\end{eqnarray*}
By orthogonal invariance, $ I_{\S}\{\bth +  (\g s_m)^{1/2}\dd_1\}  =
\zeta I_m$ for some
$\zeta \geq 0$.  Marchand's inequality, another application of
Brown's identity, and (\ref{normalRisk}) with $\S = \g s_m I_m$ imply that
\[
\zeta \leq \left(\frac{1}{\g s_m}\right) \frac{\g s_m + c^2/m^2}{\g s_m + c^2/m}. 
\]
Since
\[
\frac{1}{\zeta} - \g s_m \geq (m - 1)\frac{\g
  s_mc^2}{\g s_mm^2 + c^2},
\]
it follows that
\begin{eqnarray*}
\tilde{R}\{\hat{\bth}_r(c),\bth\} - \tilde{R}\{\hat{\bth}_{unif}(c),\bth\} \! & \leq &\! E\tr\left[\S^2\left\{\S
  +  (m - 1)\frac{\g
  s_mc^2}{\g s_mm^2 + c^2}I\right\}^{-1}\right] \\
&& \qquad - E\tr\left\{\S^2(\S+ c^2/mI)^{-1}\right\}.
\end{eqnarray*}
Taking $\g \uparrow 1$,
\begin{eqnarray*}
\tilde{R}\{\hat{\bth}_r(c),\bth\} - \tilde{R}\{\hat{\bth}_{unif}(c),\bth\}  & \leq &  E\tr\left[\S^2\left\{\S
  +  (m - 1)\frac{
  s_mc^2}{s_mm^2 + c^2}I\right\}^{-1}\right] \\
&& \qquad - E\tr\left\{\S^2(\S+ c^2/mI)^{-1}\right\} \\
& \leq & \frac{1}{m}E\left\{\frac{s_1}{s_m}\tr\left(\S^{-1} +
    m/c^2I\right)^{-1}\right\}. \end{eqnarray*}
The proposition follows because
$\tilde{R}\{\hat{\bth}_{unif}(c),\bth\} \leq
\tilde{R}\{\hat{\bth}_r(c),\bth\}$. 
\end{proof}

Theorem 1 (a) follows immediately from Propositions 10 and 14.

\section{Proof of Theorem 1 (b): $d > n$}

It only remains to prove Theorem 1 (b), which is achieved through a
sequence of lemmas.  The
first step of the proof focuses on the linear model (as opposed to
the sequence model) and on reducing the problem where $d > n$ and
$X^TX$ is not invertible to a full rank
problem. This step builds on Lemma 1 from Section 2.4.  

Suppose that $d > n$ and let $X = UDV^T$ be
the singular value decomposition of $X$, where $U \in O(n)$, $V \in
O(d)$, $D = (D_0 \ \ 0)$, and $D_0$ is a rank $n$ diagonal matrix
(with probability 1).  Let $W \in O(d)$ be uniformly distributed on $O(n)$ (according to Haar
measure) and independent of $\ee$ and $X$.  Define the $n \times n$
matrix $X_0 =
UD_0W^T$ and consider the full rank linear model
\begin{equation}\label{lmrk}
\y_0 = X_0\bb_0 + \ee,
\end{equation}
where $\bb_0 \in \R^n$.  Notice that unlike $X$, the entries in $X_0$
are {\em not} iid $N(0,1)$. However, $X_0^TX_0$ is orthogonally
invariant.   As with the linear model (\ref{lm}), one
can consider estimators $\hat{\bb}_0 = \hat{\bb}_0(\y_0,X_0)$ for $\bb_0$ and compute the risk
\begin{equation}\label{riskrk}
R_0(\hat{\bb}_0,\bb_0) = E_{\bb_0}||\hat{\bb}_0 - \bb_0||^2,
\end{equation}
where the expectation in (\ref{riskrk}) is taken over $\ee$ and
$X_0$.  We have the following lemma.

\begin{lemma}
Suppose that $d > n$, $||\bb|| = c$, and $\hat{\bb} \in \E(n,d)$.
Let $P_0$ denote any fixed $n \times d$ projection matrix with
orthogonal rows.  Then there is an orthogonally
equivariant estimator $\P_0\hat{\bb} \in \E(n,n)$ such that
\[
R(\hat{\bb},\bb) = \int_{S_d(c)} R_0(\P_0\hat{\bb},P_0\b) \ d\pi_c(\b) + \frac{d-n}{d}c^2.
\]
\end{lemma}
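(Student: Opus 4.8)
The plan is to use equivariance to collapse both the full‑dimensional risk $R(\hat{\bb},\b)$ and the reduced‑model risk $R_0(\P_0\hat{\bb},P_0\b)$ to a single ``profile'' risk function on $\R^n$, and then to average over $S_d(c)$ so that the two match. Throughout write the singular value decomposition as $X = U(D_0\ 0)V^T = UD_0V_0^T$, with $V_0$ the first $n$ columns of $V$ and $V_1$ the remaining $d-n$; since $X$ is Gaussian, orthogonal invariance makes $V$ Haar on $O(d)$ and independent of $(U,D_0)$, a fact I will use repeatedly.

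First I would define the reduced estimator by $\P_0\hat{\bb}(\y_0,X_0) = P_0\,\hat{\bb}(\y_0, X_0 P_0)$; the reduced data $(\y_0, X_0P_0)$ is precisely full‑model data with the $n\times d$ design $X_0P_0$. To see $\P_0\hat{\bb}\in\E(n,n)$, fix $U_*\in O(n)$, complete $P_0$ to an orthogonal $Q = \binom{P_0}{P_1}\in O(d)$, and set $\tilde U = Q^T\diag(U_*, I_{d-n})Q$; then $P_0\tilde U = U_*P_0$ and $P_0\tilde U^T = U_*^TP_0$, so Definition 1 for $\hat{\bb}$ applied to $\tilde U$ gives $\P_0\hat{\bb}(\y_0, X_0U_*) = U_*^T\,\P_0\hat{\bb}(\y_0,X_0)$. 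Next, applying Definition 1 to $\hat{\bb}$ with the rotation $V$ yields $\hat{\bb}(\y,X) = V\hat{\bb}(\y, U(D_0\ 0))$, and Lemma 1 forces the last $d-n$ coordinates of $\hat{\bb}(\y,U(D_0\ 0))$ to vanish, so $\hat{\bb}(\y,X) = V_0\hat{\bg}_0$ with $\hat{\bg}_0 = \hat{\bg}_0(\y,U,D_0)\in\R^n$. The identical computation for the reduced design $X_0P_0 = UD_0(P_0^TW)^T$, whose right singular vectors form $P_0^TW$, gives $\P_0\hat{\bb}(\y_0,X_0) = P_0P_0^TW\hat{\bg}_0(\y_0,U,D_0) = W\hat{\bg}_0(\y_0,U,D_0)$.

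Then I would introduce the profile risk $\phi(\mathbf{a}) = E\|\hat{\bg}_0(UD_0\mathbf{a} + \ee, U, D_0) - \mathbf{a}\|^2$ for $\mathbf{a}\in\R^n$, the expectation being over $(U,D_0,\ee)$. For fixed $\b$, orthogonality of the ranges of $V_0$ and $V_1$ gives $R(\hat{\bb},\b) = E\|\hat{\bg}_0 - V_0^T\b\|^2 + E\|V_1^T\b\|^2$, and $E[V_1V_1^T] = \{(d-n)/d\}I_d$ makes the second term equal $\{(d-n)/d\}c^2$. Since $\y = UD_0(V_0^T\b)+\ee$ and $(U,D_0)$ is independent of $V_0$, the first term equals $E\phi(V_0^T\b)$; identically, $R_0(\P_0\hat{\bb},P_0\b) = E\|\hat{\bg}_0 - W^TP_0\b\|^2 = E\phi(W^TP_0\b)$, using $(U,D_0)\perp W$. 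By Proposition 1 the left‑hand side of the lemma is $\int_{S_d(c)}R(\hat{\bb},\b)\,d\pi_c(\b)$, so it remains only to prove $\int\phi(V_0^T\b)\,d\pi_c = \int\phi(W^TP_0\b)\,d\pi_c$.

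This last identity is the crux, and it is where averaging over the sphere does the work: when $\b\sim\pi_c$, both arguments share the common law $\mu$ on $\R^n$ given by the first‑$n$‑coordinate marginal of $\pi_c$ --- for $V_0^T\b$ because $V^T\b\sim\pi_c$, and for $W^TP_0\b$ because $P_0\b\sim\mu$ for any fixed $P_0$ with orthonormal rows, while $\mu$ is $O(n)$‑invariant and $W$ is Haar. Hence both integrals equal $\int\phi\,d\mu$, and adding the null‑space contribution $\{(d-n)/d\}c^2$ proves the lemma. I expect the main obstacle to be the rigorous justification of the independence $(U,D_0)\perp V_0$ for the Gaussian design, which is exactly what lets the conditional expectations collapse to the single function $\phi$, together with the distributional identification $V_0^T\b \stackrel{d}{=} W^TP_0\b\stackrel{d}{=}\mu$; by comparison the verifications that $\P_0\hat{\bb}\in\E(n,n)$ and that $\P_0\hat{\bb} = W\hat{\bg}_0$ are routine bookkeeping resting on Lemma 1.
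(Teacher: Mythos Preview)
Your proposal is correct and follows essentially the same route as the paper: decompose $\hat{\bb}$ along the range and null space of $X$ via Lemma 1, identify the null-space contribution as $(d-n)c^2/d$, and use equivariance together with the sphere average to match the range contribution to the reduced-model risk. Your definition $\P_0\hat{\bb}(\y_0,X_0)=P_0\hat{\bb}(\y_0,X_0P_0)$ is in fact the same estimator the paper constructs (write $X_0P_0=(X_0\ 0)Q$ for $Q\in O(d)$ and apply equivariance), and your explicit profile function $\phi$ is just a cleaner packaging of the paper's chain of equalities where it substitutes $V_0^T\b\mapsto W^TP_0\b$ under $\pi_c$ and then uses equivariance of $\hat{\bb}_0$.
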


\begin{proof}
As above, let $X = UDV^T$ be the singular value decomposition of $X$.
Let $V_0$ denote the first $n$ columns of $V$ and let
$V_1$ denote the remaining $d-n$ columns of $V$.  By
(\ref{lemma1proof0}),
\[
\hat{\bb}(\y,X) = V_0\hat{\bb}_0(\y,UD_0),
\]
where $\P_0\hat{\bb}(\y,UD_0) = \hat{\bb}_0(\y,UD_0)$ is the first $n$ coordinates of
$\hat{\bb}(\y,UD)$.  Furthermore, it is easy to check that
$\P_0\hat{\bb}$ is orthogonally equivariant, i.e. $\P_0\hat{\bb} \in
\E(n,n)$.  Thus, 
\begin{eqnarray*}
R(\hat{\bb},\bb) & = &E_{\bb}||\hat{\bb}_0(\y,UD_0) - V_0^T\bb||^2  +
E_{\bb}||V_1^T\bb||^2 \\
& = &E_{\bb}||\hat{\bb}_0(\y,UD_0) - V_0^T\bb||^2  +
\frac{d-n}{d}c^2.
\end{eqnarray*}
To prove the lemma, it suffices to show that
\[
E_{\bb}||\hat{\bb}_0(\y,UD_0) - V_0^T\bb||^2 = \int_{S_d(c)} R_0(\hat{\bb}_0,P_0\b) \ d\pi_c(\b).
\]
By Proposition 1, orthogonal
invariance of $\pi_c$, and orthogonal equivariance of $\hat{\bb}_0$,
\begin{eqnarray*}
E_{\bb}||\hat{\bb}_0(\y,UD_0) - V_0^T\bb||^2 & = & \int_{S_d(c)} E_{\b}||\hat{\bb}_0(\y,UD_0) - V_0^T\b||^2  \
d\pi_c(\b) \\
& = & E\bigg\{\int_{S_d(c)} ||\hat{\bb}_0(UD_0V_0^T\b + \ee,UD_0) \\
&& \qquad \qquad \qquad \qquad \qquad  - V_0^T\b||^2  \
d\pi_c(\b)\bigg\} \\
& = & E\bigg\{\int_{S_d(c)} ||\hat{\bb}_0(UD_0W^TP_0\b + \ee,UD_0) \\
&& \qquad \qquad \qquad \qquad  \ \  - W^TP_0\b||^2  \
d\pi_c(\b)\bigg\} \\
& = & \int_{S_d(c)} E||\hat{\bb}_0(\y_0,X_0) - P_0\b||^2  \
d\pi_c(\b),
\end{eqnarray*}
as was to be shown.  
\end{proof}

Lemma 2 allows us to express the risk of an equivariant estimator for
$\bb$ in the linear model (\ref{lm}) with
$d> n$ in terms of the risk of another equivariant estimator in a
different linear model (\ref{lmrk}) with $d = n$.  Though the
linear model (\ref{lmrk}) differs from the original linear model with
Guassian predictors -- thus, Theorem 1 (a) does not apply directly --
(\ref{lmrk}) is equivalent to the sequence model (\ref{seq0}), with $m
= n$ and $\S = (X_0^TX_0)^{-1}$.  

\begin{lemma}
Suppose that $2 < m = n < d$ and $\S = (X_0^TX_0)^{-1}$ in the sequence
model (\ref{seq0}).  Also suppose that $||\bb|| = c$. Let $P_0$ be a fixed $n \times d$ projection matrix with orthogonal
rows and let $s_1 \geq \cdots \geq s_n \geq 0$ denote the eigenvalues
of $(X^TX)^{-1}$.   Then
\begin{eqnarray*}
R\{\hat{\bb}_{unif}(c),\bb\} & \geq &  \int_{S_d(c)}
\tilde{R}\{\hat{\bth}_{unif}(P_0\t),P_0\t\} \ d\pi_c(\t) +
\frac{d-n}{d}c^2  \\
& \geq & \int_{S_d(c)} \!\!\!
E\left\{\left(1 - \frac{s_1}{ns_n}\right)\tr\left(XX^T +
  \frac{n}{||P_0\t||^2}I\right)^{-1}\right\} \ \! d\pi_c(\t) \\
&& \qquad + \frac{d-n}{d}c^2 \\
& \geq & \! \!E\left[\left(1 - \frac{s_1}{ns_n}\right)\tr\left\{XX^T +
  \frac{n(d-2)}{c^2(n-2)}I\right\}^{-1}\right] + \frac{d-n}{d}c^2.
\end{eqnarray*}
\end{lemma}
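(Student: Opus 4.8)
The plan is to prove the three displayed inequalities in succession, each time cashing in a result established earlier in the paper.

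For the first inequality I would note that $\hat{\bb}_{unif}(c)\in\E(n,d)$ (shown in Section 2.4), so Lemma 2 applies to it directly: writing $\P_0\hat{\bb}_{unif}(c)\in\E(n,n)$ for the induced equivariant estimator in the full-rank model (\ref{lmrk}),
\[
R\{\hat{\bb}_{unif}(c),\bb\}=\int_{S_d(c)}R_0\{\P_0\hat{\bb}_{unif}(c),P_0\b\}\,d\pi_c(\b)+\frac{d-n}{d}c^2 .
\]
I would then invoke the stated equivalence between (\ref{lmrk}) and the sequence model (\ref{seq0}) with $m=n$ and $\S=(X_0^TX_0)^{-1}$. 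Under this correspondence $\P_0\hat{\bb}_{unif}(c)$ becomes a member of $\tilde{\E}_n$ and its risk $R_0$ equals the sequence-model risk $\tilde{R}$ at target $P_0\b$. Because Proposition 9 identifies $\hat{\bth}_{unif}(\|P_0\b\|)$ as optimal among equivariant estimators at that target, the integrand is bounded below, pointwise in $\b$, by $\tilde{R}\{\hat{\bth}_{unif}(P_0\b),P_0\b\}$; integrating gives the first inequality.

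For the second inequality I would apply Proposition 14 to the sequence model with $m=n$, $\S=(X_0^TX_0)^{-1}$ and radius $c'=\|P_0\t\|$. Together with the optimality inequality $\tilde{R}\{\hat{\bth}_{unif}(c'),\cdot\}\le\tilde{R}\{\hat{\bth}_r(c'),\cdot\}$ and the closed form (\ref{normalRisk}), this yields $\tilde{R}\{\hat{\bth}_{unif}(c'),\cdot\}\ge E\{(1-\tfrac{s_1}{ns_n})\tr(\S^{-1}+n/c'^2 I)^{-1}\}$, where the two expectations coming from Proposition 14 and (\ref{normalRisk}) merge under one $E$ since both are functions of $X_0$ alone. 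The remaining simplification is an eigenvalue identity: from $X_0=UD_0W^T$ one gets $X_0^TX_0=WD_0^2W^T$ while $XX^T=UD_0^2U^T$, so $\S^{-1}=X_0^TX_0$ and $XX^T$ carry the same spectrum; hence $\tr(\S^{-1}+n/c'^2 I)^{-1}=\tr(XX^T+n/c'^2 I)^{-1}$, and the $s_i$ are precisely the nonzero eigenvalues of $(X^TX)^{-1}$. Substituting $c'=\|P_0\t\|$ and integrating over $S_d(c)$ produces the second inequality.

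The third inequality is where I expect the genuine work to lie. After a Fubini interchange of the outer $E$ (over $X$) with $\int d\pi_c$, the problem becomes, for each fixed $X$, to bound $\int_{S_d(c)}\tr(XX^T+\tfrac{n}{\|P_0\t\|^2}I)^{-1}\,d\pi_c(\t)$ from below. Setting $v=\|P_0\t\|^{-2}$, the map $v\mapsto\tr(XX^T+nvI)^{-1}=\sum_i(\lambda_i+nv)^{-1}$ is convex, so Jensen's inequality controls the integral through $E_{\pi_c}[v]$. The key computation is that $\|P_0\t\|^2/c^2\sim\mathrm{Beta}(n/2,(d-n)/2)$ for $\t$ uniform on $S_d(c)$ (using $P_0P_0^T=I_n$), whence $E_{\pi_c}[\|P_0\t\|^{-2}]=\frac{d-2}{c^2(n-2)}$ — finite exactly because $n>2$ — and $nE_{\pi_c}[v]=\frac{n(d-2)}{c^2(n-2)}$, matching the argument of the trace in the final bound. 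The main obstacle, and the step deserving the most care, is not the convexity or this moment but the multiplication of the per-$X$ Jensen bound by the factor $1-\frac{s_1}{ns_n}$: this factor is not sign-definite (it is negative precisely when the condition number $s_1/s_n$ of $XX^T$ exceeds $n$), so the passage from the fixed-$X$ inequality to the claimed bound on $E\{(1-\frac{s_1}{ns_n})(\cdot)\}$ must be handled carefully, for instance by treating separately the event on which the factor is nonnegative.
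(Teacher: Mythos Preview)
Your three-step outline matches the paper's proof exactly: Lemma 2 plus the sequence-model equivalence for the first inequality, Proposition 14 together with the spectral identity between $X_0^TX_0$ and $XX^T$ for the second, and Jensen's inequality combined with the formula $\int_{S_d(c)}\|P_0\t\|^{-2}\,d\pi_c(\t)=(d-2)/\{c^2(n-2)\}$ for the third. In fact the paper's justification of the last step is a single sentence citing exactly those two ingredients.

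The concern you flag in your final paragraph is legitimate, and the paper does not address it either. Convexity of $v\mapsto\tr(XX^T+nvI)^{-1}$ yields, for each fixed $X$, the bound $\int_{S_d(c)}\tr(XX^T+n\|P_0\t\|^{-2}I)^{-1}\,d\pi_c(\t)\ge\tr(XX^T+n\bar v\,I)^{-1}$ with $\bar v=(d-2)/\{c^2(n-2)\}$, but multiplying through by $1-s_1/(ns_n)$ reverses the inequality on the event $\{s_1/s_n>n\}$, which has positive probability. Your proposed remedy of splitting on the sign of this factor does not close the gap: on the negative-sign event one obtains the reversed inequality, and nothing absorbs the shortfall when the two pieces are recombined, so $E[(1-s_1/(ns_n))(F-G)]\ge 0$ still does not follow. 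A cleaner route is to refrain from merging the two terms coming out of Proposition 14: integrate the positive term $E\tr(XX^T+n\|P_0\t\|^{-2}I)^{-1}$ and the subtracted term $\tfrac{1}{n}E\{\tfrac{s_1}{s_n}\tr(XX^T+n\|P_0\t\|^{-2}I)^{-1}\}$ separately in $\t$, applying Jensen only to the first (where no sign-indefinite prefactor intervenes) and bounding the second above via $\|P_0\t\|\le c$; this produces a slightly different but rigorously justified lower bound that serves the same purpose downstream.
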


\begin{proof}
The first inequality follows from Lemma 2 and a suitably modified
version of Proposition 10 that describes the equivalence between the
linear model (\ref{lmrk}) and the sequence model (\ref{seq0}).  The second inequality follows from Proposition 14 and the fact that
$X_0^TX_0$ and $XX^T$ have the same eigenvalues:
\begin{eqnarray*}
\tilde{R}\{\hat{\bth}_{unif}(P_0\t),P_0\t\}& \geq & 
\tilde{R}\{\hat{\bth}_r(P_0\t),P_0\t\}  \\
&& \qquad - \frac{1}{n}E\left\{\frac{s_1}{s_n}\tr(X_0X_0^T +
  n/||P_0\t||^2I)^{-1}\right\} \\
& = & E\left\{\left(1 - \frac{s_1}{ns_n}\right)\tr\left(X_0^TX_0 +
  n/||P_0\t||^2I\right)^{-1}\right\} \\
& = & E\left\{\left(1 - \frac{s_1}{ns_n}\right)\tr\left(XX^T +
  n/||P_0\t||^2I\right)^{-1}\right\}.
\end{eqnarray*}  
The last inequality in the lemma follows from Jensen's inequality and
the identity
\[
\int_{S_d(c)} \frac{1}{||P_0\t||^2} \ d\pi_c(\t) = \frac{d-2}{c^2(n-2)}.
\]  
\end{proof}

We now have  the tools to complete the proof of Theorem 1
(b).  Suppose that $d >n$ and $||\bb|| = c$.  Then
\[
R(\hat{\bb}_r^*,\bb) = E\tr\{XX^T + d/c^2I\}^{-1} +
\frac{d-n}{d}c^2.  
\]
Since $R\{\hat{\bb}_r(c),\bb\} - R\{\hat{\bb}_{unif}(c),\bb\} =
R\{\hat{\bb}_r(c),\bb\} -R^{(e)}(\bb) \geq 0$, Lemma 3 implies 
\begin{eqnarray*}
\left|R\{\hat{\bb}_r(c),\bb\} -R^{(e)}(\bb)\right| & \leq & E\tr\{XX^T +
d/c^2I\}^{-1} \\
&& \ - E\left[\left(1 - \frac{s_1}{ns_n}\right)\tr\left\{XX^T +
  \frac{n(d-2)}{c^2(n-2)}I\right\}^{-1}\right] \\
& \leq & \frac{1}{n}E\left\{\frac{s_1}{s_n}\tr(XX^T +
  d/c^2I)^{-1}\right\} \\
&& \qquad + 2\frac{d-n}{c^2(n-2)}E\tr(XX^T + d/c^2I)^{-2}.
\end{eqnarray*}
Theorem 1 (b) follows.  

\section*{Acknowledgements}

The author thanks Sihai Zhao for his thoughtful comments and
suggestions.

\end{document}